\theoremstyle{plain}
\newtheorem{theorem}{\bf Theorem}[section]
\newtheorem{proposition}[theorem]{\bf Proposition}
\newtheorem{lemma}[theorem]{\bf Lemma}
\theoremstyle{definition}
\newtheorem{definition}[theorem]{\bf Definition}
\newcommand{\N}{\mathbb N}
 \DeclareMathOperator{\ord}{ord}
 \DeclareMathOperator{\supp}{supp}
\renewcommand{\t}{\, | \,}
\numberwithin{equation}{section}
\begin{document}

\title{Products of $k$ atoms in Krull monoids}

\author{Yushuang Fan }
\address{Mathematical College, China University of Geosciences (Beijing)\\
Haidian District, Beijing, the People's Republic of China}
\email{fys@cugb.edu.cn
}

\author{Qinghai Zhong}

\address{Institute for Mathematics and Scientific Computing,  University of Graz, NAWI Graz \\
 Heinrichstra{\ss}e 36, 8010 Graz, Austria}
 \email{qinghai.zhong@uni-graz.at}

\thanks{Main part of this manuscript was written while the first author visited University of Graz. She would like to gratefully acknowledge the kind hospitality from the host institute and  the authors would like to thank Professor Alfred Geroldinger  for his many helpful suggestions.  This work was supported by the Austrian Science Fund FWF (Project No. M1641-N26),
  NSFC (grant no. 11401542), and the Fundamental Research Funds for the Central Universities (grant no. 2652014033).}

\subjclass[2010]{11B30, 11R27, 13A05, 13F05, 20M13}

\keywords{non-unique factorizations, sets of lengths, Krull monoids, zero-sum sequences}

\begin{abstract}
Let $H$ be a Krull monoid with finite class group $G$ such that every class contains a prime divisor. For $k\in \N$, let $\mathcal U_k(H)$ denote the set of all $m\in \N$ with the following property: There exist atoms $u_1, \ldots, u_k, v_1, \ldots , v_m\in H$ such that $u_1\cdot\ldots\cdot u_k=v_1\cdot\ldots\cdot v_m$. It is well-known that the sets $\mathcal U_k (H)$ are finite intervals whose maxima $\rho_k(H)=\max \mathcal U_k(H) $ depend only on $G$.  If $|G|\le 2$, then $\rho_k (H) = k$ for every $k \in \N$. Suppose that $|G| \ge 3$. An elementary counting argument shows that  $\rho_{2k}(H)=k\mathsf D(G)$ and  $k\mathsf D(G)+1\le \rho_{2k+1}(H)\le k\mathsf D(G)+\lfloor \frac{\mathsf D(G)}{2}\rfloor$ where  $\mathsf D(G)$ is the Davenport constant. In \cite{Ga-Ge09b} it was proved that for cyclic groups we have $k\mathsf D(G)+1 = \rho_{2k+1}(H)$ for every $k \in \N$. In the present paper we show that (under a mild condition on the Davenport constant) for every noncyclic group there exists a $k^*\in \N$ such that $\rho_{2k+1}(H)= k\mathsf D(G)+\lfloor \frac{\mathsf D(G)}{2}\rfloor$ for every $k\ge k^*$.
This confirms a conjecture of  A. Geroldinger, D. Grynkiewicz, and P. Yuan in \cite{Ge-Gr-Yu15}.
\end{abstract}

\maketitle

\section{Introduction}

Let $H$ be an atomic monoid. If an element $a \in H$ has a factorization $a = u_1 \cdot \ldots \cdot u_k$  into atoms $u_1, \ldots, u_k \in H$, then $k$ is called the length of the factorization, and the set $\mathsf L (a)$ of all possible lengths is called the set of lengths of $a$. For $k \in \mathbb N$, let $\mathcal U_k (H)$ denote the set of all $m \in \mathbb N$ with the following property: There exist atoms $u_1, \ldots, u_k, v_1, \ldots, v_m \in H$ such that $u_1 \cdot \ldots \cdot u_k = v_1 \cdot \ldots \cdot v_m$. Thus $\mathcal U_k (H)$ is the union of all sets of lengths containing $k$. The sets $\mathcal U_k (H)$ are one of the most investigated invariants in factorization theory which were  introduced by S.T. Chapman and W.W. Smith in  Dedekind domains (\cite{Ch-Sm90a}). Their suprema $\rho_k (H) = \sup \mathcal U_k (H)$ were first studied in the 1980s for rings of integers in algebraic number fields (\cite{Cz81, Sa-Za82}). Since then these invariants have been studied  in a variety of settings, including numerical monoids, monoids of modules, noetherian and Krull domains (for a sample out of many we refer to
\cite{Fr-Ge08, Bl-Ga-Ge11a, B-C-H-P08, Ge-Ka-Re15a, Ba-Ge14b}).

In the present paper we focus on Krull monoids with class group $G$ such that every class contains a prime divisor. If $|G|\le 2$, then $\mathcal U_k (H) = \{k\}$ and if $G$ is infinite, then $\mathcal U_k (H) = \N_{\ge 2}$ for all $k \in \N$. Suppose that $G$ is finite with $|G| \ge 3$. This setting includes holomorphy rings in global fields. For more examples we refer to \cite{Ge-Gr-Yu15}, and a detailed exposition of Krull monoids can be found in \cite{HK98, Ge-HK06a}.

The unions
$\mathcal U_k (H) \subset \mathbb N$ are finite intervals, say $\mathcal U_k (H) = [\lambda_k (H), \rho_k (H)]$, whose minima $\lambda_k (H)$ can be expressed in terms of $\rho_k (H)$ (\cite[Chapter 3]{Ge09a}). Elementary counting arguments (e.g. \cite[Section 6.3]{Ge-HK06a}) show that, for every $k \in \N$, we have $\rho_{2k} (H) = k \mathsf D (G)$ and that
\begin{equation} \label{inequality}
k\mathsf D(G)+1\le \rho_{2k+1}(H)\le k\mathsf D(G)+\left\lfloor \frac{\mathsf D(G)}{2}\right\rfloor \,.
\end{equation}
Based on the Savchev-Chen Structure Theorem \cite[Section 11.3]{Gr13a} (resp. on a related result on the index of sequences)
Gao and Geroldinger \cite{Ga-Ge09b} showed that for every cyclic group  $G$ and every $k\in \N$ we have $\rho_{2k+1}(H)= k\mathsf D(G)+1$.
In \cite[Conjecture 3.3]{Ge-Gr-Yu15}, the authors conjectured that for every noncyclic group $G$ there exists a $k^* \in \N$ such that
\[
\rho_{2k+1}(H) = k\mathsf D(G)+\left\lfloor \frac{\mathsf D(G)}{2}\right\rfloor \quad \text{for every} \quad k \ge k^* \,.
\]
We confirm this conjecture for wide classes of groups. For a precise formulation of our main result we need one more definition.
Suppose that  $G \cong C_{n_1}\oplus \ldots \oplus C_{n_r}$ where $r, n_1,\ldots, n_r \in \N$ with $1<n_1\t  \ldots \t n_r$, and set
\[
\mathsf D^*(G)=1+\sum_{i=1}^r (n_i-1) \,.
\]
It is well-known that  $\mathsf D^* (G) \le  \mathsf D(G)$. Equality holds for  $p$-groups, groups of rank at most two, and others (see \cite[Corollary 4.2.13]{Ge09a}, \cite{Bh-SP07a}  for recent progress), but it does not hold in general (\cite{Ge-Li-Ph12}). Here is our main result.

\begin{theorem} \label{Main}
Let $H$ be a Krull monoid with finite noncyclic class group $G$ such that every class contains a prime divisor.
Then there exists a $k^*\in \N$ such that
\[
\rho_{2k+1}(H)\ge (k-k^*)\mathsf D(G)+ k^*\mathsf D^*(G)+\left\lfloor \frac{\mathsf D^*(G)}{2}\right\rfloor \qquad \text{ for every} \ k\ge k^*\,.
\]
In particular, if $\mathsf D(G)=\mathsf D^*(G)$, then $$\rho_{2k+1}(H)= k\mathsf D(G)+\left\lfloor \frac{\mathsf D(G)}{2}\right\rfloor \qquad \text{ for every} \ k\ge k^*\,.$$
\end{theorem}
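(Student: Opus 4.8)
The plan is to pass to the combinatorial model and then reduce the statement to a single, self-contained construction. Since $H$ is Krull with every class containing a prime divisor, lengths are governed by the monoid $\mathcal B(G)$ of zero-sum sequences over $G$, and $\rho_{2k+1}(H)=\rho_{2k+1}(\mathcal B(G))$; thus I would work entirely with zero-sum sequences, whose atoms are the minimal zero-sum sequences (of length between $2$ and $\mathsf D(G)$). Two elementary facts drive the reduction. First, concatenation gives superadditivity: if $S$ has factorizations of lengths $k$ and $\rho_k$, and $S'$ of lengths $k'$ and $\rho_{k'}$, then $SS'$ witnesses $\rho_{k+k'}\ge \rho_k+\rho_{k'}$. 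Second, $\rho_{2m}=m\mathsf D(G)$ is already known. Writing $2k+1=2(k-k^*)+(2k^*+1)$, superadditivity gives $\rho_{2k+1}\ge (k-k^*)\mathsf D(G)+\rho_{2k^*+1}$, so the whole theorem follows once I produce a single $k^*$ with $\rho_{2k^*+1}(\mathcal B(G))\ge k^*\mathsf D^*(G)+\lfloor \mathsf D^*(G)/2\rfloor$. The special case $\mathsf D=\mathsf D^*$ then combines with the upper bound in \eqref{inequality} to yield equality.

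To build the gadget I would use the standard long atom. Writing $G\cong C_{n_1}\oplus\cdots\oplus C_{n_r}$ with basis $e_1,\dots,e_r$ and $g_0=-\sum_i(n_i-1)e_i$, the sequence $U=g_0\prod_i e_i^{n_i-1}$ is an atom of length $\mathsf D^*(G)$, and so is $-U$. The target count $k^*\mathsf D^*+\lfloor\mathsf D^*/2\rfloor=\lfloor(2k^*+1)\mathsf D^*/2\rfloor$ is exactly one half of the total length of $2k^*+1$ atoms of maximal length $\mathsf D^*$; hence I would seek a zero-sum sequence $S_0$ that factors into $2k^*+1$ atoms each of length $\mathsf D^*$ and whose content is invariant under $g\mapsto -g$, up to a single remainder of length $3$ forced by parity. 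Such an $S_0$ refactors into $\lfloor|S_0|/2\rfloor$ atoms by pairing each $g$ with a $-g$ into a length-$2$ atom $g(-g)$ (a $2$-torsion element pairing with itself), which realizes the bound. The backbone is $k^*$ copies of the symmetric pair $U(-U)$, which already contributes $k^*\mathsf D^*$ length-$2$ atoms; the extra $\lfloor\mathsf D^*/2\rfloor$ must come from an additional odd packet of maximal atoms whose union is almost symmetric.

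The crux, and the main obstacle, is the construction of this odd symmetric packet. Because an atom contains at most one of each pair $\{g,-g\}$ (and each $2$-torsion element at most once), no maximal atom is itself symmetric, and negation cannot merely permute the atoms of a symmetric packet while fixing one; the atoms must be genuinely entangled, with the negatives of one atom's entries distributed across the others. Concretely, for three maximal atoms $A_1,A_2,A_3$ I would demand that $A_1A_2A_3$ be negation-invariant (up to one length-$3$ atom): in the elementary $2$-group case this is the transparent condition that the three support vectors sum to zero in $\F_2^{\,|G|-1}$, forcing all multiplicities even, and the general case is its analogue over the negation involution. Carrying this out for an arbitrary noncyclic $G$ is the technical heart: one must exhibit explicit maximal atoms whose contents balance under negation, distinguishing the parity of $\mathsf D^*(G)$ and the $2$-rank of $G$, and enlarging $k^*$ when a three-atom packet is too rigid to balance (the extra copies of $U(-U)$ then supply the partners needed to absorb the asymmetry). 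This is also precisely why only $\mathsf D^*(G)$, and not $\mathsf D(G)$, appears: the construction rests on the explicit basis atom $U$, whereas genuine atoms of length $\mathsf D(G)>\mathsf D^*(G)$ lack the structural control needed to assemble a symmetric packet, which is exactly the gap left open when $\mathsf D(G)>\mathsf D^*(G)$.
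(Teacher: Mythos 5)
Your reduction is correct and coincides with the paper's own: the transfer to $\mathcal B(G)$ is Proposition \ref{2.1}, the superadditivity step $\rho_{2k+1}(G)\ge \rho_{2(k-k^*)}(G)+\rho_{2k^*+1}(G)=(k-k^*)\mathsf D(G)+\rho_{2k^*+1}(G)$ is exactly how the paper concludes, the equality case for $\mathsf D(G)=\mathsf D^*(G)$ via \eqref{inequality} is the same, and your target gadget --- a product of $2k^*+1$ atoms of length $\mathsf D^*(G)$ whose content refactors into length-$2$ atoms $g(-g)$ up to one length-$3$ remainder forced by the parity of $\mathsf D^*(G)$ --- is precisely the paper's notion of a \emph{nice} element (Definition \ref{def-nice}), and your count $k^*\mathsf D^*(G)+\lfloor\mathsf D^*(G)/2\rfloor=\lfloor(2k^*+1)\mathsf D^*(G)/2\rfloor$ is right, as is your observation that no single maximal atom can be symmetric, so the extra atoms must be entangled.

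But the proof stops exactly where the theorem begins. You never construct the ``odd symmetric packet''; you only describe the balance condition it must satisfy and label its construction ``the technical heart.'' That heart is the entire paper: Lemma \ref{M1} builds nice elements over rank-two groups $C_n\oplus C_{mn}$ via explicit families $U_i, V_j, W_j$ of maximal atoms and three parity cases, each requiring a tower of auxiliary pair-nice blocks and exceptional atoms $X,Y$ (and the resulting $k^*$ grows with $m$ and $n$, so no bounded three-atom ansatz can succeed --- your proviso about ``enlarging $k^*$'' anticipates this but supplies no mechanism); Lemma \ref{M2} does rank three with four parity cases; and Lemma \ref{M3} glues nice products across a splitting $G=G_1\oplus G_2$ by matching distinguished elements $g_i\in\supp(U_i)$, $h_i\in\supp(V_i)$ and forming atoms $W_i=U_ig_i^{-1}\cdot V_ih_i^{-1}\cdot(g_i+h_i)$, with three modified atoms in the mixed-parity case, which enables the induction on rank. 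There is also a structural reason your outline cannot be completed by soft arguments alone: every ingredient you actually deploy --- the basis atom $U=g_0\prod_i e_i^{n_i-1}$, the backbone $U(-U)$, negation-symmetry, the parity bookkeeping --- exists verbatim for cyclic $G$, where the conclusion is false ($\rho_{2k+1}(G)=k\mathsf D(G)+1$ by \cite{Ga-Ge09b}, and the paper notes there are no nice elements over cyclic groups of order at least four). Hence any completion must exploit noncyclicity in an essential, constructive way, and your proposal contains no such step; as written it establishes only the elementary reduction of Theorem \ref{Main} to the existence of nice elements, not their existence.
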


In \cite{Ge-Gr-Yu15},  Geroldinger, Grynkiewicz, and  Yuan gave a list of  groups for which the above result holds with $k^* =1$. Furthermore, they showed that if $G \cong C_m \oplus C_{mn}$ with $n \ge 1$ and $m \ge 2$, then the result holds with $k^*=1$ if and only if $n=1$ or $m=n=2$. It remains a challenging task to determine, for a given group $G$,  the smallest possible $k^* \in \N$ for which the above statement holds.

It is well-known that the invariants $\rho_k (H)$ can be studied in an associated monoid of zero-sum sequences and this allows to use methods from Additive Combinatorics (see Lemma \ref{2.1}). In Section \ref{2} we fix our notation and terminology. At the beginning of Section \ref{3} we introduce our main concept in Definition \ref{def-nice} and after that we discuss the strategy of the proof.

\bigskip
\section{Preliminaries} \label{2}

Let $\mathbb N$ denote the set of positive integers and $\mathbb{N}_0=\mathbb{N}\cup\{0\}$. For  real numbers $a, b \in \mathbb R$, we denote by  $[a, b] = \{ x \in
\mathbb Z \mid a \le x \le b\}$ the discrete interval. For $n \in \N$ we denote by $C_n$ a cyclic group of order $n$. Let $G$ be a finite abelian group. Then $G \cong C_{n_1} \oplus \ldots \oplus C_{n_r}$ where $r \in \N_0$, $n_1, \ldots, n_r \in \N$ with $1 < n_1 \t \ldots \t n_r$. We call $r = \mathsf r (G)$ the {\it rank} of $G$ (thus $\mathsf r (G)$ is the maximum of the $p$-ranks of $G$), and a tuple $(e_1, \ldots, e_s)$ of nonzero elements of $G$ is said to be a {\it basis} of $G$ if $G = \langle e_1 \rangle \oplus \ldots \oplus \langle e_s \rangle$.
We start with a couple of remarks on abstract monoids, continue with the monoid of zero-sum sequences, and then we deal  with Krull monoids.

By a   {\it monoid},  we  mean a commutative
semigroup with identity which satisfies the cancellation law (that
is, if $a,b ,c$ are elements of the monoid with $ab = ac$, then $b =
c$ follows). The multiplicative semigroup of non-zero elements of an
integral domain is a monoid.
Let $H$ be a monoid. We
denote by $H^{\times}$ the group of invertible elements of $H$ and by  $\mathcal A (H)$  the   set of atoms (irreducible elements) of $H$. If $a=u_1 \cdot \ldots \cdot u_k$, where $k \in \N$ and $u_1, \ldots, u_k \in \mathcal A (H)$, then $k$ is called the length of the factorization and  $\mathsf L (a) = \{ k \in \N \mid a \ \text{has a factorization of length } \ k \} \subseteq \N$ is the set of lengths of $a$. For convenience, we set $\mathsf L (a) = \{0\}$ if $a \in H^{\times}$.  Furthermore, we denote by
\[
\mathcal L (H) = \{\mathsf L (a) \mid a \in H \} \quad \text{the  system of sets of lengths of }  H \,.
\]
Let $k \in \mathbb N$ and suppose that  $H \ne H^{\times}$. Then
\[
\mathcal U_k (H) \ = \ \bigcup_{a \in H, \, k \in \mathsf L (a)}  \mathsf L (a)
\]
is the union of all sets of lengths containing $k$. Thus, $\mathcal U_k (H)$ is the set of all $m \in \N$ such that there are atoms $u_1, \ldots, u_k, v_1, \ldots, v_m$ with $u_1 \cdot \ldots \cdot u_k = v_1 \cdot \ldots \cdot v_m$, and we define $\rho_k (H) = \sup \, \mathcal U_k (H)$.
Sets of lengths are the best investigated invariants in Factorization Theory (for an overview we refer to \cite{Ge-HK06a, C-F-G-O16}).

\smallskip
Let $G$ be an additively written  finite abelian group. By a {\it sequence} over $G$, we mean a finite sequence of terms from $G$ where repetition is allowed and the order is disregarded. As usual (see \cite{Ge-HK06a, Gr13a}), we consider sequences as elements of the free abelian monoid $\mathcal{F}(G)$  with basis $G$. A sequence $S$ over $G$ will be written in the form
$$S=g_1\cdot \ldots \cdot g_l=\prod_{g\in G}g^{\mathsf v_g(S)}\in \mathcal{F}(G),$$
and we call
\begin{itemize}
\item[] $|S|=l=\sum_{g\in G}\mathsf v_g(S)\in \mathbb{N}_0$ the \emph{length} of $S$,
\item[] $\supp(S)=\{g\in G\mid \mathsf v_g(S)>0\}\subseteq G$ the \emph{support} of $S$, and
\item[] $\sigma(S)=\sum_{i=1}^{l}g_i=\sum_{g\in G}\mathsf v_g(S)g\in G$ the \emph{sum} of $S$.
\end{itemize}
We say that $S$ is a {\it zero-sum sequence} if $\sigma (S)=0$, and clearly the set of zero-sum sequences
\[
\mathcal B(G) = \{ S  \in \mathcal F(G) \mid \sigma (S) =0\} \subset \mathcal F (G)
\]
is a submonoid of $\mathcal F (G)$, called the
 {\it monoid of zero-sum sequences} \ over \ $G$. Clearly, an element $A \in \mathcal B (G)$ is irreducible if and only if it is a minimal zero-sum sequence, and we denote by
$\mathcal A(G) := \mathcal A \big( \mathcal B (G) \big)$ the set of atoms of $\mathcal B (G)$. This set is finite, and
the {\it Davenport constant} $\mathsf D (G)$  of $G$ is the maximal length of a minimal zero-sum sequence over $G$, thus
\[
\mathsf  D (G) = \max \bigl\{ |U| \, \bigm| \; U \in \mathcal A
(G) \bigr\} \in \N \,.
\]
In other words, $\mathsf D (G)$ is the smallest integer $\ell$ such that every sequence $S$ over $G$ of length $|S|\ge \ell$ has a nontrivial zero-sum subsequence.

\smallskip
A monoid $H$ is a {\it Krull monoid} if one of the following equivalent conditions is satisfied:
\begin{enumerate}
\item[(a)] $H$ is completely integrally closed and satisfies the ascending chain condition on divisorial ideals.

\item[(b)] There is a free abelian monoid $F$ and a homomorphism $\varphi \colon H \to F$ with the following property: if $a, b \in H$ and $\varphi (a)$ divides $\varphi (b)$ in $F$, then $a$ divides $b$ in $H$.
\end{enumerate}
We refer to the monographs \cite{HK98, Ge-HK06a}    for a detailed exposition of Krull monoids and to the already mentioned paper \cite{Ge-Gr-Yu15}. We just mention that a domain $R$ is a Krull domain if and only if its monoid of nonzero elements is a Krull monoid, and for monoids of modules which are Krull we refer to \cite{Ba-Wi13a,Ba-Ge14b,Fa06a}. Property (a) easily shows that every integrally closed noetherian domain is a Krull domain. Since the embedding $\mathcal B (G) \hookrightarrow \mathcal F (G)$ satisfies Property (b), we infer that $\mathcal B (G)$ is a Krull monoid. It is easy to verify that the class group of $\mathcal B (G)$ is isomorphic to $G$ and that every class contains a prime divisor. Furthermore, $\mathcal B (G)$ plays a universal role in the study of the arithmetic of general Krull monoids.
In particular, the system of sets of lengths of a Krull monoid $H$ with class group $G$, where each class contains a prime divisor, coincides with the system of sets of lengths of $\mathcal B (G)$. We give a precise formulation of this well-known fact (for progress in this directions see \cite[Proposition 2.2]{Ge-Gr-Yu15}).

\smallskip
\begin{proposition}[\cite{Ge-HK06a}, Theorem 3.4.10] \label{2.1}
Let $H$ be a Krull monoid with class group $G$ such that every class contains a prime divisor. Then there is a transfer homomorphism $\boldsymbol \beta \colon   H \to \mathcal B (G)$ which
implies that, for every $k \in \N$,
\[
\mathcal U_k (H) = \mathcal U_k  (\mathcal  B(G))     \quad \text{and} \quad \rho_k (H) =  \rho_k  (\mathcal  B(G))   \,.
\]
\end{proposition}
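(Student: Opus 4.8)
The plan is to construct $\boldsymbol\beta$ as the block homomorphism attached to the divisor theory of $H$, and then to invoke the standard fact that transfer homomorphisms preserve sets of lengths; since $\mathcal U_k$ and $\rho_k$ are defined purely through the system of sets of lengths, the asserted equalities will follow formally. First I would use the Krull property to fix a divisor homomorphism $\varphi\colon H\to F=\mathcal F(P)$ into a free abelian monoid whose basis $P$ is the set of prime divisors. This $\varphi$ is saturated, meaning $\varphi(H)=\mathsf q(\varphi(H))\cap F$, its reduced class group is isomorphic to $G$ via a class map $[\,\cdot\,]\colon F\to G$, and the hypothesis that every class contains a prime divisor says precisely that the restriction $P\to G$, $p\mapsto [p]$, is surjective. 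I would then define $\boldsymbol\beta=\widetilde{\boldsymbol\beta}\circ\varphi$, where $\widetilde{\boldsymbol\beta}\colon \mathcal F(P)\to\mathcal F(G)$ sends each prime $p$ to the length-one sequence $[p]\in\mathcal F(G)$ and is extended multiplicatively. For $a\in H$ the class of $\varphi(a)$ is trivial, so $\sigma(\boldsymbol\beta(a))=0$, and hence $\boldsymbol\beta$ indeed maps $H$ into $\mathcal B(G)$.

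The core of the argument is to check that $\boldsymbol\beta$ is a transfer homomorphism, that is: (T1) $\boldsymbol\beta$ is surjective and $\boldsymbol\beta^{-1}(\{1\})=H^{\times}$; and (T2) every factorization $\boldsymbol\beta(a)=B_1B_2$ in $\mathcal B(G)$ lifts to a factorization $a=a_1a_2$ in $H$ with $\boldsymbol\beta(a_i)=B_i$. Surjectivity follows because, given a zero-sum sequence $g_1\cdot\ldots\cdot g_l$, I may choose primes $p_i\in P$ with $[p_i]=g_i$; the product $x=p_1\cdot\ldots\cdot p_l\in F$ then satisfies $[x]=g_1+\ldots+g_l=0$, so by saturation $x\in\varphi(H)$, say $x=\varphi(a)$, whence $\boldsymbol\beta(a)=g_1\cdot\ldots\cdot g_l$. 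The condition $\boldsymbol\beta^{-1}(\{1\})=H^{\times}$ holds because $\widetilde{\boldsymbol\beta}$ carries nonempty sequences to nonempty sequences, so $\boldsymbol\beta(a)=1$ forces $\varphi(a)=1$, i.e. $a\in H^{\times}$.

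The same saturation mechanism is the crux of (T2). A splitting $\boldsymbol\beta(a)=B_1B_2$ is a partition of the multiset of classes $[p_1],\ldots,[p_l]$ underlying $\varphi(a)=p_1\cdot\ldots\cdot p_l$, and I would lift it to a splitting $\varphi(a)=x_1x_2$ with $\widetilde{\boldsymbol\beta}(x_j)=B_j$. Since $B_1,B_2$ are zero-sum sequences, $[x_1]=[x_2]=0$, so $x_1,x_2\in\varphi(H)$ by saturation, say $x_j=\varphi(a_j)$. Then $\varphi(a)=\varphi(a_1a_2)$, and the divisor-homomorphism property (b) gives $a\simeq a_1a_2$, so after absorbing a unit I obtain $a=a_1a_2$ with $\boldsymbol\beta(a_j)=B_j$, as required.

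Once $\boldsymbol\beta$ is known to be a transfer homomorphism, I would record the routine consequences: by (T2) an atom of $H$ has an atom of $\mathcal B(G)$ as image and factorizations lift, while images of factorizations are factorizations, so $\mathsf L_H(a)=\mathsf L_{\mathcal B(G)}(\boldsymbol\beta(a))$ for every $a\in H$; combined with surjectivity from (T1) this yields $\mathcal L(H)=\mathcal L(\mathcal B(G))$. Because $\mathcal U_k(H)=\bigcup_{k\in L\in\mathcal L(H)}L$ and $\rho_k(H)=\sup\mathcal U_k(H)$ depend only on this system, both equalities in the statement follow. I expect the main obstacle to be the verification of (T2): everything rests on lifting a purely combinatorial splitting of a zero-sum sequence back into $H$, and the only tool that makes this possible is the saturation of the divisor theory, namely that an element of $F$ belongs to $\varphi(H)$ as soon as its class in $G$ vanishes.
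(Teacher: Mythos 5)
Your proof is correct and is essentially the argument the paper relies on: the paper gives no proof of its own but cites \cite[Theorem 3.4.10]{Ge-HK06a}, and your construction of the block homomorphism $\boldsymbol\beta=\widetilde{\boldsymbol\beta}\circ\varphi$ from the divisor theory, the verification of (T1) and (T2) via saturation of $\varphi(H)$ in $\mathcal F(P)$, and the deduction $\mathsf L_H(a)=\mathsf L_{\mathcal B(G)}(\boldsymbol\beta(a))$, hence $\mathcal L(H)=\mathcal L(\mathcal B(G))$ and the equalities for $\mathcal U_k$ and $\rho_k$, is precisely the standard proof given there. No gaps.
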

Thus the invariants $\rho_k (H)$ can be studied in the monoid of zero-sum sequences $\mathcal B (G)$. As usual, we set \   $\mathcal U_k (G) = \mathcal U_k (\mathcal B (G))$ and $\rho_k (G) = \rho_k (\mathcal B (G))$.

\bigskip
\section{Proof of Theorem \ref{Main}} \label{3}

Throughout this section, let $G$ be a finite abelian group. If $|G| \le 2$, then $\mathcal B (G)$ is factorial whence $\rho_k (G) = k$ for every $k \in \N$. Clearly, $\mathsf D^* (G)=3$ if and only if $G$ is cyclic of order three or isomorphic to $C_2 \oplus C_2$. In this case,  Inequality \eqref{inequality} is an equality, and in particular Theorem \ref{Main} holds with $k^*=1$. Thus for the remainder of this section we suppose  that $\mathsf D^* (G) \ge 4$, and this implies that $|G|\ge 4$.

\smallskip
We introduce the main concept of the present paper.

\begin{definition} \label{def-nice}
Let   $A\in \mathcal B(G)$.
\begin{enumerate}
\item We say that $A$ is {\it pair-nice} (with respect to $G$) if there is a factorization $A=U_1\cdot\ldots\cdot U_{2k}$, $k\in \N$ with the following properties

 \begin{itemize}
\item[(a)] $U_1,\ldots, U_{2k}\in \mathcal A(G)$ with $|U_1|=\ldots=|U_{2k}|=\mathsf D^*(G)$;

\item[(b)] For all $i\in [1,2k]$, there is a $g_i\in \supp(U_i)$ such that  $g_1\cdot\ldots\cdot g_{2k}$ is a   product of length $2$ atoms.
 \end{itemize}

\item We say that $A$ is {\it nice} (with respect to $G$) if there is a factorization $A=U_1\cdot\ldots\cdot U_{2k+1}$, $k\in \N$ with the following properties

   \begin{itemize}
\item[(a)] $U_1,\ldots, U_{2k+1}\in \mathcal A(G)$ with $|U_1|=\ldots=|U_{2k+1}|=\mathsf D^*(G)$;

\item[(b)] For all $i\in [1,2k+1]$, there is a $g_i\in \supp(U_i)$ such that  one of the following holds:
     \begin{itemize}
     \item[(i)] $\mathsf D^*(G)$ is odd, $A(g_1\cdot\ldots\cdot g_{2k+1})^{-1}$ is a product of length $2$ atoms, and $g_1\cdot \ldots \cdot g_{2k+1} =W_0\cdot W_1\cdot\ldots\cdot W_{k-1}$, where  $|W_0|=3$, $|W_1|=\ldots=|W_{k-1}|=2$, and $W_0,\ldots,W_{k-1}\in \mathcal A(G)$.

     \item[(ii)]  $\mathsf D^*(G)$ is even and there exists a $g_{2k+2}\in \supp\left(A(g_1\cdot\ldots\cdot g_{2k+1}\right)^{-1})$ such that  $A(g_1\cdot\ldots\cdot g_{2k+2})^{-1}$  and $g_1\cdot\ldots\cdot g_{2k+2}$ are  both  products of length $2$ atoms.
\end{itemize}
\end{itemize}
\end{enumerate}
\end{definition}

\smallskip
Suppose there exists a {\it nice}  $A\in \mathcal B(G)$, and let all notation be as in the above definition. Then
\[
\{2k+1, k\mathsf D^*(G)+\lfloor\frac{\mathsf D^*(G)}{2}\rfloor\}
\subseteq \mathsf L(A) \quad \text{and hence} \quad  \rho_{2k+1}(G)\ge k\mathsf D^*(G)+\lfloor\frac{\mathsf D^*(G)}{2}\rfloor \,.
\]
Thus, up to a small calculation (which will be done in the actual proof of Theorem \ref{Main}), the assertion of the theorem follows. Therefore the main task of  the paper is to find nice elements. We do this for groups of rank two (Lemma \ref{M1}), for groups of rank three (Lemma \ref{M2}), and then we put all together in Lemma \ref{M3}. Note, if $G$ is  cyclic  of order greater than or equal to four, then there are no nice elements. Furthermore, if $A$ is nice or pair-nice, then $0 \notin \supp (A)$.

Our first lemma gathers some basic facts  which we will use without further mention.
\begin{lemma}\label{product}
Let  $E,E_1$ be {\it pair-nice} zero-sum sequences (with respect to $G$).  Suppose that $X_1,X_2,X_3\in \mathcal A(G)$ are of length $\mathsf D^*(G)$. Then
\begin{enumerate}
\item $E\cdot E_1$ is {\it pair-nice} (with respect to $G$);
\item If $\mathsf D^*(G)$ is even and $E\cdot X_1$  is a product of length $2$ atoms, then  $E\cdot X_1$ is {\it nice} (with respect to $G$);

\item If $\mathsf D^*(G)$ is odd and there exists $a_i\in\supp(X_i)$ for each $i\in [1,3]$ such that $a_1a_2a_3\in \mathcal A(G)$  and $EX_1X_2X_3(a_1a_2a_3)^{-1}$ is a product of length $2$ atoms, then $EX_1X_2X_3$ is {\it nice} (with respect to $G$).
\end{enumerate}
\end{lemma}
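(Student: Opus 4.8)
The plan is to unwind the definitions directly, since Lemma \ref{product} is essentially a bookkeeping statement: each part asserts that concatenating pair-nice sequences (or augmenting one by suitable atoms of length $\mathsf D^*(G)$) again yields a pair-nice or nice sequence. For part (1), I would take factorizations $E = U_1 \cdots U_{2k}$ and $E_1 = U_1' \cdots U_{2k'}'$ witnessing pair-niceness, each with all atoms of length $\mathsf D^*(G)$, and with distinguished elements $g_1, \ldots, g_{2k} \in \supp(U_i)$ and $h_1, \ldots, h_{2k'} \in \supp(U_j')$ whose products $g_1 \cdots g_{2k}$ and $h_1 \cdots h_{2k'}$ each factor into length-$2$ atoms. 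The concatenated factorization $E \cdot E_1 = U_1 \cdots U_{2k} U_1' \cdots U_{2k'}'$ has $2(k + k')$ atoms, all of length $\mathsf D^*(G)$, so condition (a) holds. For condition (b), I take the same distinguished elements; their total product is $(g_1 \cdots g_{2k})(h_1 \cdots h_{2k'})$, which is a product of two zero-sum sequences each splitting into length-$2$ atoms, hence itself a product of length-$2$ atoms. Thus $E \cdot E_1$ is pair-nice.

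For parts (2) and (3), the idea is to realize the disjunctive clause (i) or (ii) in the definition of \emph{nice} by hand. In part (2), $\mathsf D^*(G)$ is even, and I would start from a pair-nice factorization $E = U_1 \cdots U_{2k}$ with distinguished $g_1, \ldots, g_{2k}$; appending $X_1$ (which has length $\mathsf D^*(G)$) gives a factorization $E \cdot X_1 = U_1 \cdots U_{2k} X_1$ of length $2k+1$ into atoms of length $\mathsf D^*(G)$, so condition (a) holds. For condition (b) I must produce the distinguished element $g_{2k+1} \in \supp(X_1)$ and then the further element $g_{2k+2}$ required by clause (ii). The hypothesis that $E \cdot X_1$ is a product of length-$2$ atoms is what I would exploit: since $|E \cdot X_1| = (2k+1)\mathsf D^*(G)$ is even (as $\mathsf D^*(G)$ is even), such a factorization into length-$2$ atoms exists and pairs up every term; choosing $g_{2k+1}$ and $g_{2k+2}$ as a matched pair from this factorization, and recombining with the $g_1, \ldots, g_{2k}$ supplied by pair-niceness of $E$, I can arrange both $E \cdot X_1 (g_1 \cdots g_{2k+2})^{-1}$ and $g_1 \cdots g_{2k+2}$ to split into length-$2$ atoms, matching clause (ii).

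Part (3) is the analogue for odd $\mathsf D^*(G)$, and is the clause I expect to be the main obstacle, since now the parity forces the odd three-element factor $W_0$ of length $3$. Here I begin with $E = U_1 \cdots U_{2k}$ pair-nice and append the three atoms $X_1, X_2, X_3$ of length $\mathsf D^*(G)$, yielding $E X_1 X_2 X_3 = U_1 \cdots U_{2k} X_1 X_2 X_3$, a factorization of length $2k + 3 = 2(k+1)+1$ into atoms of length $\mathsf D^*(G)$; so condition (a) holds with the index $k$ of the definition equal to $k+1$. The hypothesis gives $a_i \in \supp(X_i)$ with $a_1 a_2 a_3 \in \mathcal A(G)$ (this will serve as $W_0$) and with $E X_1 X_2 X_3 (a_1 a_2 a_3)^{-1}$ a product of length-$2$ atoms (matching the requirement on $A (g_1 \cdots g_{2k+1})^{-1}$ in clause (i)). The delicate point is to verify clause (i) in full: I must exhibit distinguished elements $g_1, \ldots, g_{2(k+1)+1}$, take $g_{2k+1}, g_{2k+2}, g_{2k+3}$ to be $a_1, a_2, a_3$ and the remaining ones from pair-niceness of $E$, and then check that $g_1 \cdots g_{2k+3}$ factors as $W_0 \cdot W_1 \cdots W_{k}$ with $W_0 = a_1 a_2 a_3$ of length $3$ and the rest of length $2$. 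The product $g_1 \cdots g_{2k}$ splits into length-$2$ atoms by pair-niceness of $E$, so multiplying by the length-$3$ atom $a_1 a_2 a_3$ gives exactly the required shape, with the count of length-$2$ blocks equal to $k$. The only real care needed throughout is index-counting to confirm the block sizes and the number of $W_i$ match the definition precisely; the zero-sum structure transfers automatically from the hypotheses.
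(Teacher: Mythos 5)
Your overall architecture is the same as the paper's: part (1) by concatenating the two witnessing factorizations, parts (2) and (3) by realizing clauses (ii) and (i) of Definition \ref{def-nice} with distinguished elements inherited from the pair-niceness of $E$ together with elements of the $X_i$. Part (1) is correct, and your index bookkeeping in part (3) (that $W_0=a_1a_2a_3$ of length $3$ together with the $k$ length-$2$ blocks from $g_1\cdot\ldots\cdot g_{2k}$ matches the required shape) is also correct. However, in parts (2) and (3) you assert precisely the two points where the proof has actual content. First, the complement conditions do not ``transfer automatically.'' In part (3) the hypothesis says that $EX_1X_2X_3(a_1a_2a_3)^{-1}$ is a product of length-$2$ atoms, whereas clause (i) requires that $EX_1X_2X_3(g_1\cdot\ldots\cdot g_{2k}\,a_1a_2a_3)^{-1}$ be one; you must justify that dividing a product of length-$2$ atoms by a subsequence $g_1\cdot\ldots\cdot g_{2k}$ which is itself a product of length-$2$ atoms again leaves a product of length-$2$ atoms. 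This is true and easy --- a zero-sum sequence $B$ is a product of length-$2$ atoms if and only if $0\notin\supp(B)$, $\mathsf v_g(B)=\mathsf v_{-g}(B)$ for all $g$ with $g\neq -g$, and $\mathsf v_g(B)$ is even for all $g$ with $g=-g$, and these valuation conditions are preserved under subtraction --- but it is the lemma-level fact the paper's proof uses in every part, and your phrases ``I can arrange'' and ``the zero-sum structure transfers automatically'' stand in for it without argument.

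Second, in part (2) choosing $g_{2k+1},g_{2k+2}$ ``as a matched pair from this factorization'' of $E\cdot X_1$ is flawed on two counts. The definition forces $g_{2k+1}\in\supp(X_1)$ (it is the distinguished element of the atom $U_{2k+1}=X_1$), while an arbitrary matched pair may lie entirely inside $E$; and a pair taken from the hypothesized factorization of $E\cdot X_1$ can collide, as an occurrence of terms, with the $g_1,\ldots,g_{2k}$ supplied by pair-niceness of $E$ (which pair up among themselves in an unrelated way), so neither the divisibility $g_1\cdot\ldots\cdot g_{2k+2}\mid EX_1$ nor the splitting of the complement is automatic. The paper avoids both problems by first passing to $Q=E(g_1\cdot\ldots\cdot g_{2k})^{-1}X_1$, a product of length-$2$ atoms by the closure fact above, and then observing that since $X_1$ is an atom of length $\mathsf D^*(G)\geq 4>2$ its terms cannot all be matched within $X_1$ in any length-$2$ factorization of $Q$ (else $X_1$ itself would be a product of length-$2$ atoms); hence there is a cross pair $x\in\supp(X_1)$, $y\in\supp\bigl(E(g_1\cdot\ldots\cdot g_{2k})^{-1}\bigr)$ with $xy\in\mathcal A(G)$, after which both $g_1\cdot\ldots\cdot g_{2k}\cdot xy$ and $EX_1(g_1\cdot\ldots\cdot g_{2k}\cdot xy)^{-1}$ split as required. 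A further small slip: evenness of $|E\cdot X_1|$ does not ``imply such a factorization exists'' --- its existence is the hypothesis, not a consequence of parity. With the valuation lemma isolated and the pair chosen inside $Q$ so as to meet $X_1$, your outline closes to the paper's proof.
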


\begin{proof}
Since $E$ is {\it pair-nice}, we assume that $E=U_1\cdot \ldots\cdot U_{2k}$, where $k\in \mathbb{N}$ and $U_1, \dots, U_{2k}\in \mathcal A(G)$ are of length $\mathsf D^*(G)$, and there exists $g_i\in \supp(U_i)$ for each $i\in [1, 2k]$ such that  $g_1\cdot\ldots\cdot g_{2k}$ is a product of length $2$ atoms.
\begin{enumerate}
\item It is obvious by definition.

\item Since $E\cdot X_1$ and $g_1\cdot\ldots\cdot g_{2k}$ are both products of length $2$ atoms, we obtain that $E(g_1\cdot\ldots\cdot g_{2k})^{-1}X_1$ is a product of length $2$ atoms. Therefore there exist $x\in \supp( X_1)$ and $y\in \supp( E(g_1\cdot\ldots\cdot g_{2k})^{-1})$ such that $xy\in \mathcal A(G)$. It follows that $g_1\cdot\ldots\cdot g_{2k}\cdot xy$ and $EX_1(g_1\cdot\ldots\cdot g_{2k}\cdot xy)^{-1}$ are both products of length $2$ atoms which implies that  $E\cdot X_1$ is {\it nice} by $\mathsf D^*(G)$ is even.

\item Since $EX_1X_2X_3(a_1a_2a_3)^{-1}$ and $g_1\cdot\ldots\cdot g_{2k}$ are product of length $2$ atoms, we have that  $EX_1X_2X_3(g_1\cdot\ldots\cdot g_{2k}a_1a_2a_3)^{-1}$ is a product of length $2$ atoms. Moreover, $a_1a_2a_3$ is an atom implies that  $EX_1X_2X_3$ is {\it nice} by $\mathsf D^*(G)$ is odd.
\end{enumerate}
\end{proof}

\begin{lemma}\label{M1}
Let $G=C_n\oplus C_{mn}$ with $n> 1$ and $m\in \N$. Then there exist a $k^*\in \N$ and atoms $W_1, \ldots, W_{2k^*+1}\in \mathcal A(G) $ of length $\mathsf D^*(G)$ such that $W_1\cdot\ldots \cdot W_{2k^*+1}$ is {\it nice}.
\end{lemma}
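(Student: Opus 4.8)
The plan is to construct a single nice element by combining the two standard maximal atoms with their reflections and then quoting Lemma \ref{product}. Fix a basis $(e_1,e_2)$ of $G$ with $\ord(e_1)=n$ and $\ord(e_2)=mn$, put $h=e_1+e_2$, and recall that $\mathsf D^*(G)=(m+1)n-1$. The distinguished atom is
\[
U=e_1^{\,n-1}e_2^{\,mn-1}h\in\mathcal A(G),\qquad |U|=\mathsf D^*(G),
\]
with reflection $-U=(-e_1)^{n-1}(-e_2)^{mn-1}(-h)$. Since $\mathsf v_g\big(U(-U)\big)=\mathsf v_{-g}\big(U(-U)\big)$ for all $g$, the product $U(-U)$ is a product of length $2$ atoms, and taking $g_1=h\in\supp(U)$, $g_2=-h\in\supp(-U)$ shows that $U(-U)$ is pair-nice. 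In view of Lemma \ref{product}, to prove Lemma \ref{M1} it suffices to exhibit finitely many further maximal atoms whose product, together with a pair-nice block, becomes a product of length $2$ atoms after deleting a small prescribed piece: when $\mathsf D^*(G)$ is even I will use part (2), and when $\mathsf D^*(G)$ is odd I will use part (3) (deleting a minimal zero-sum triple). Here I call a sequence \emph{balanced} if it is a product of length $2$ atoms, i.e.\ $\mathsf v_g=\mathsf v_{-g}$ for all $g$.

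The heart of the matter is that an \emph{odd} number of maximal atoms can have a balanced product, even though no single maximal atom of length $\ge 4$ is balanced; this is exactly where rank $\ge 2$ enters, since cyclic groups admit no nice elements. I realize this by a refactorization. Suppose $\mathsf D^*(G)$ is even. I look for a balanced sequence $T$ with $|T|=\mathsf D^*(G)$ such that $(-U)\,T$, of length $2\mathsf D^*(G)$, can be rewritten as a product $QR$ of two \emph{further} maximal atoms $Q,R$. Granting this, $A:=UQR=U(-U)T$ is balanced and is a product of the three maximal atoms $U,Q,R$; writing $A=E\cdot U$ with $E=QR$ pair-nice (the representatives being a term of $Q$ and its negative in $R$) and applying Lemma \ref{product}(2) makes $A$ nice, so Lemma \ref{M1} holds with $k^*=1$. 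The decisive design choice is how to split the terms: I distribute the two terms of each length $2$ atom $w(-w)$ of $T$ so that $w$ goes into $Q$ and $-w$ into $R$, and I split the terms of $-U$ between $Q$ and $R$ as well. This guarantees that neither $Q$ nor $R$ contains a pair $g,-g$, which is the only obvious obstruction to their minimality; the remaining freedom in the $w$'s (subject only to $\sum_j w_j=-\sigma(P_Q)$, where $P_Q$ denotes the part of $-U$ placed in $Q$) is then used to force $Q$ and $R$ to be genuinely minimal of the correct length $\mathsf D^*(G)$.

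When $\mathsf D^*(G)$ is odd the total length $5\mathsf D^*(G)$ of the five maximal atoms $U,-U,X_1,X_2,X_3$ is odd, so it cannot be balanced; instead I delete a minimal zero-sum triple $a_1a_2a_3$ with $a_i\in\supp(X_i)$ (for instance $a_1a_2a_3=e_1(-e_2)(e_2-e_1)$, which is readily checked to be an atom) and arrange that $X_1X_2X_3(a_1a_2a_3)^{-1}$ is balanced, again splitting the length $2$ atoms of the balanced remainder across $X_1,X_2,X_3$ so that no $X_i$ acquires a pair $g,-g$. Then with $E=U(-U)$ pair-nice, Lemma \ref{product}(3) shows that $A:=U(-U)X_1X_2X_3$ is nice, and $A$ is a product of $5$ maximal atoms, so Lemma \ref{M1} holds with $k^*=2$.

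The main obstacle is the refactorization itself, namely proving that $Q,R$ (respectively $X_1,X_2,X_3$) are \emph{minimal} of length exactly $\mathsf D^*(G)$. Avoiding pairs $g,-g$ rules out the length $2$ subsums, but one must still exclude all other proper zero-sum subsequences, and this has to be done uniformly in $n$ and $m$; the choice of the auxiliary elements $w_j$ and of the deleted triple is governed by this minimality requirement, and I expect the argument to split into a few subcases according to the parity of $\mathsf D^*(G)=(m+1)n-1$ and to whether $mn$ is even (so that the order $2$ element $\tfrac{mn}{2}e_2$ is available to supply extra slack). Once the atoms are in hand, verifying that the prescribed representatives $g_i$, $a_i$ and, in the even case, $g_{2k+2}$ lie in the required supports so that every clause of Definition \ref{def-nice} holds is a routine check.
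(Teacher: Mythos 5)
Your scheme in the even case is provably impossible, not merely unfinished. If a nice element $A=UQR$ with $k=1$ existed, then by the computation following Definition \ref{def-nice} one would get $\{3,\,\mathsf D^*(G)+\lfloor \mathsf D^*(G)/2\rfloor\}\subseteq \mathsf L(A)$, hence $\rho_3(G)\ge \mathsf D^*(G)+\lfloor \mathsf D^*(G)/2\rfloor$; since $G$ has rank two, $\mathsf D(G)=\mathsf D^*(G)$, so Inequality \eqref{inequality} would force $\rho_3(G)=\mathsf D(G)+\lfloor \mathsf D(G)/2\rfloor$. But the result of Geroldinger, Grynkiewicz, and Yuan \cite{Ge-Gr-Yu15} quoted in the introduction says this equality holds for $C_{m'}\oplus C_{m'n'}$ with $m'\ge 2$ \emph{only} when $n'=1$ or $m'=n'=2$, i.e.\ only for groups $C_\nu\oplus C_\nu$ and $C_2\oplus C_4$. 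Your even case is $\mathsf D^*(G)=(m+1)n-1$ even, which forces $n$ odd, $n\ge 3$, and $m$ even, $m\ge 2$; no such group is of the exceptional form (and $C_2\oplus C_4$ has $\mathsf D^*=5$ odd, so it does not fall in this case anyway). Consequently the refactorization $(-U)T=QR$ into two further atoms of length $\mathsf D^*(G)$, which you explicitly "grant", does not exist for a single group covered by your even case, and the plan collapses there. Your odd case with $k^*=2$ is not refuted by that result, but it carries the same burden: the existence and \emph{minimality} of the three atoms $X_1,X_2,X_3$ with the required balanced remainder is precisely the content of the lemma, and you defer it as "the main obstacle". Note that avoiding pairs $g,-g$ only excludes zero-sum subsequences of length $2$; a zero-sum sequence of length $\mathsf D^*(G)$ over a rank-two group generically has many longer proper zero-sum subsequences, so this heuristic gives no control over minimality.

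The contrast with the paper is instructive: the paper never attempts small $k^*$. It constructs, for each parity case, explicit one-parameter families of maximal atoms $U_i$ ($i\in[0,n-1]$), $V_j$, $W_j$ ($j\in[0,mn-1]$) built so that the \emph{full} products $U_0\cdot\ldots\cdot U_{n-1}$ etc.\ telescope into products of length-$2$ atoms after removing controlled blocks such as $e_1^{n(n-1)}$ or $e_2^{(mn-1)mn}$; it then cancels those blocks against reflected copies (obtained by basis changes like $(e_1,e_2)\mapsto(-e_1,e_2)$) and against auxiliary pair-nice products $XY$, $X(-Y)$, before finally invoking Lemma \ref{product}.2 or \ref{product}.3. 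The resulting $k^*$ grows with $n$ and $m$, and the obstruction above shows that this largeness is not an artifact: some growth in $k^*$ is genuinely necessary, at least when $\mathsf D^*(G)$ is even. Your opening observations (that $U(-U)$ is pair-nice, and that Lemma \ref{product} reduces everything to exhibiting suitable balanced remainders) are correct and match the paper's framework, but the constructive core of the proof is absent, and in the even case no construction of the kind you describe can supply it.
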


\begin{proof}
Let $(e_1, e_2)$ be a basis of $G$ with $\ord(e_1)=n$ and $\ord(e_2)=mn$. Then $\mathsf D^*(G)=mn+n-1$.

Now set
\begin{align*}
U_i& = e_1^{n-1}\left((-1)^{i+1}e_2+(i+1)e_1\right)\left((-1)^{i+1}e_2-ie_1\right)\left((-1)^{i+1}e_2\right)^{mn-2}\,,\\
V_j& = e_2^{mn-1}\left((-1)^{j+1}e_1+(j+1)e_2\right)\left((-1)^{j+1}e_1-je_2\right)\left((-1)^{j+1}e_1\right)^{n-2}\,,\\
W_j& = (e_1+e_2)^{mn-1}\left((-1)^{j+1}e_1+(j+1)(e_1+e_2)\right)\left((-1)^{j+1}e_1-j(e_1+e_2)\right)\left((-1)^{j+1}e_1\right)^{n-2}\,,
\end{align*}
where $i\in [0, n-1]$ and $j\in [0, mn-1]$. Then $|U_i|=|V_j|=|W_j|=\mathsf D^*(G)$ and $U_i, V_j, W_j\in \mathcal {A}(G)$ for all $i\in [0, n-1]$, $j\in [0, mn-1]$.

\medskip
We distinguish the following three cases.

\smallskip
{\bf Case 1:} $n$ is odd and $m$ is even.

Let $n=2\alpha+1$ with $\alpha\geq 1$. Then $m\ge 2$ and $\mathsf D^*(G)$ is even.

Since $mn$ is even, let $E_2=V_0\cdot\ldots\cdot V_{mn-1}$ and hence $E_2$ is {\it pair-nice}. By calculation, we obtain that $E_2\,e_2^{-(mn-1)mn}$ is a product of length $2$ atoms.

Let $E_3=V_0\cdot\ldots\cdot V_{(m-1)n-1}W_{(m-1)n}\cdot\ldots\cdot W_{mn-1}$ and hence  $E_3$ is {\it pair-nice}. By calculation, we obtain that  $E_3\left( e_2^{(mn-1)(m-1)n}(e_1+e_2)^{(mn-1)n} \right)^{-1} $ is a product of length $2$ atoms.

Replacing the basis $(e_1,e_2)$ with $(-e_1, e_2)$, we can construct a zero-sum sequence $E_3'$ similar with $E_3$ such that $E_3'$ is {\it pair-nice} and $E_3'\left( e_2^{(mn-1)(m-1)n}(-e_1+e_2)^{(mn-1)n}\right)^{-1}$ is a product of length $2$ atoms.

Let
$X=e_2^{mn-1}(e_1+e_2)^{\alpha+1}(e_1-e_2)^\alpha$ and
 $Y=e_2^{mn-1}(e_2-e_1)(-e_1)^{n-1}$.
Then $X$ and $Y$ are atoms of length $\mathsf D^*(G)$.
Since $XY$ and $X(-Y)$ are {\it pair-nice}, we obtain that
$$E'=E_2^{2\alpha} E_3^{\alpha+1}(-E_3')^\alpha((-X)Y)^{\frac{(mn-1)n-1}{2}}((-X)(-Y))^{\frac{(mn-1)n-1}{2}} \text{ is {\it pair-nice} .}$$

 By  calculation we have that $(-X)E'$ is a product of length $2$ atoms.
It follows that  $(-X)E'$ is {\it nice} by Lemma \ref{product}.2.

\medskip
{\bf Case 2:} $n$ is odd and $m$ is odd.

Then $\mathsf D^*(G)$ is odd.
Since $n$ is odd, let $O_1=U_0\cdot \ldots\cdot U_{n-1}$ and hence by calculation,  we can obtain that $O_1\left( e_1^{n(n-1)}(-e_2)^{mn}\right)^{-1}$ is a product of length $2$ atoms.

Since $mn$ is odd, let $O_2=V_0\cdot \ldots\cdot V_{mn-1}$ and hence by calculation, $O_2\left( e_2^{(mn-1)mn}(-e_1)^n\right)^{-1}$ is  a product of length $2$ atoms.

Replacing the basis $(e_1, e_2)$ with $(-e_1, e_2)$, we can construct a zero-sum sequence $O_1'$ similarly with $O_1$ such that $O_1'\left ((-e_1)^{n(n-1)}(-e_2)^{mn}\right)^{-1}$ is a product of length $2$ atoms.
 By the constructions of $O_1$ and $O_1'$, we can obtain that $O_1O_1'$ is {\it pair-nice} and $O_1O_1'\left(-e_2\right)^{-2mn}$ is a product of length $2$ atoms.
We denote  $O_1O_1'$ by $E$.

Let $X=(-e_1)^{n-1}e_2^{mn-1}(e_2-e_1)$ and hence $X$ is an atom of length $\mathsf D^*(G)$. Therefore we let 
 $O=O_2^{n-2}O_1XE^{\frac{(mn-1)(n-2)}{2}}$ and hence $O\left((e_2-e_1)e_1(-e_2)\right)^{-1}$ is a product of length $2$ atoms.

Since $O_2U_i$ is {\it pair-nice} for each $i\in [0,n-1]$, we obtain that $O_2^{n-2}O_1(U_0U_1)^{-1}$ is {\it pair-nice} and hence $O(U_0U_1X)^{-1}$ is {\it pair-nice}.

By $(e_2-e_1)\in \supp( X)$, $-e_2\in\supp( U_0)$, and $e_1\in \supp( U_1)$, Lemma \ref{product}.3 implies  that   $O$ is {\it nice}.

\medskip
{\bf Case 3:} $n$ is even.

Then $\mathsf D^*(G)$ is odd.
Since $n$ is even, let $E_1=U_0\cdot \ldots\cdot U_{n-1}$ and hence $E_1$ is {\it pair-nice} and  $E_1e_1^{-n(n-1)}$ is a product of length $2$ atoms.
Let $E_2=V_0\cdot \ldots\cdot V_{mn-1}$ and hence $E_2$ is {\it pair-nice} and  $E_2e_2^{-(mn-1)mn} $ is a product of length $2$ atoms.

Let \begin{align*}
X_1=&e_1^{n-1}e_2^{mn-1}(e_1+e_2)\,,\\ X_2=&e_2^{mn-1}e_1(e_1+e_2)^{\frac{n}{2}}(e_1-e_2)^{\frac{n}{2}-1}\,,\\  Y=&e_1^{n-1}(-e_2)^{mn-1}(e_1-e_2)\,.
\end{align*}
 Then $X_1$, $X_2$, and $Y$ are atoms of length $\mathsf D^*(G)$. Since $X_1Y$ and $X_1(-Y)$ are both {\it pair-nice}, we obtain that
 $(X_1Y)^{\frac{mn}{2}}(X_1(-Y))^{\frac{mn}{2}}(-E_1)^m(-E_2)$ is {\it pair-nice} and
$$(X_1Y)^{\frac{mn}{2}}(X_1(-Y))^{\frac{mn}{2}}(-E_1)^m(-E_2)(e_1+e_2)^{-mn} \text{ is product of length $2$ atoms } \,.$$

Replacing the basis $(e_1, e_2)$ with $(e_1, e_2-e_1)$,
we can construct a zero-sum sequence $E$ similarly  such that $E$ is {\it pair-nice} and $Ee_2^{-mn}$ is a product of length $2$ atoms.

Then let $E'=(X_1Y)^{\frac{n}{2}}(X_1(-Y))^{\frac{n}{2}}(-E_1)(-E)^n$ and hence $E'$ is {\it pair-nice} and $$E'\left((e_1+e_2)^n(-e_2)^n\right)^{-1} \text{ is a product of length $2$ atoms }.$$
Similarly with $E'$,  if we replace the basis $(e_1, e_2)$ with $(e_1, -e_2)$, we can construct a zero-sum sequence $E''$ such that $E''$  is {\it pair-nice} and
$$E''\left((e_1-e_2)^ne_2^n\right)^{-1} \text{ is a product of length $2$ atoms }.$$
Since  $X_2Y$ and $X_2(-Y)$ are both {\it pair-nice}, we
let $E'''=(X_2Y)^{\frac{n}{2}}(X_2(-Y))^{\frac{n}{2}}(-E)^n(-E')^{\frac{n}{2}}(-E'')^{\frac{n}{2}-1}$ and hence $E'''$ is {\it pair-nice} and
$E'''e_1^{-n}$ is a product of length $2$ atoms.
It follows that $(-E)(-E''')$ is {\it pair-nice}, $(e_1+e_2)\in\supp( X_1)$, $-e_2\in \supp (Y)$, $-e_1\in \supp (-Y)$, and
$$X_1Y(-Y)(-E)(-E''') \left((-e_2)(-e_1)(e_1+e_2)\right)^{-1} \text{ is a product of length $2$ atoms }$$
which implies that  $X_1Y(-Y)(-E)(-E''')$ is {\it nice} by Lemma \ref{product}.3.
\end{proof}

\begin{lemma}\label{M2}
Let $G=C_{n_1}\oplus C_{n_2}\oplus C_{n_3}$ with $1<n_1\t n_2\t n_3$. Then there exist a $k^*\in \N$ and atoms $W_1,  \ldots, W_{2k^*+1}\in \mathcal A(G) $ of length $\mathsf D^*(G)$ such that  $W_1\cdot\ldots \cdot W_{2k^*+1}$ is {\it nice}.
\end{lemma}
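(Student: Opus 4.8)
The plan is to reduce to the rank-two situation of Lemma \ref{M1}. Fix a basis $(e_1,e_2,e_3)$ of $G$ with $\ord(e_i)=n_i$ and put $G_0=\langle e_2,e_3\rangle\cong C_{n_2}\oplus C_{n_3}$, which has exactly the form treated in Lemma \ref{M1} because $n_2\t n_3$ and $n_2>1$. The numerical input is the identity $\mathsf D^*(G)=\mathsf D^*(G_0)+(n_1-1)$, so that an atom of $G_0$ of length $\mathsf D^*(G_0)$ is shorter by precisely $n_1-1$ than an atom of $G$ of length $\mathsf D^*(G)$. This gap is bridged by a lifting operation: for $U\in\mathcal A(G_0)$ of length $\mathsf D^*(G_0)$ and $h\in\supp(U)$ I set
\[
\Phi_h(U)=e_1^{\,n_1-1}(h+e_1)\,Uh^{-1}\qquad\text{and}\qquad \Psi_h(U)=(-e_1)^{\,n_1-1}(h-e_1)\,Uh^{-1}\,.
\]
First I would check that both are atoms of $G$ of length $\mathsf D^*(G)$; this is immediate from the projection $\pi\colon G\to\langle e_1\rangle$ killing $e_2,e_3$, under which $\Phi_h(U)$ has image $e_1^{\,n_1}0^{|U|-1}$, so that a zero-sum subsequence either avoids every term carrying an $e_1$ — and is then a zero-sum subsequence of $U$, hence trivial by minimality — or contains all of them, in which case minimality of $U$ forces it to be everything.

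The point of the $\pm e_1$ variants is that this lifting preserves pair-niceness with the selected elements left untouched. If $E_0=U_1\cdot\ldots\cdot U_{2k}$ is \emph{pair-nice} with respect to $G_0$ with selected $g_i\in\supp(U_i)$, then each $U_i$ has at least two distinct terms, since its length $n_2+n_3-1$ exceeds the maximal order $n_3$ of an element of $G_0$; hence I may shift a term $h_i\neq g_i$, so that $g_i$ survives in $\supp(\Phi_{h_i}(U_i))$. A length-two atom of $G_0$ is still a length-two atom of $G$, so $\Phi_{h_1}(U_1)\cdot\ldots\cdot\Phi_{h_{2k}}(U_{2k})$ is \emph{pair-nice} with respect to $G$ with the \emph{same} selected elements and the \emph{same} length-two factorization. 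Feeding in the pair-nice elements furnished by the proof of Lemma \ref{M1} (and multiplying them when convenient, which is harmless by Lemma \ref{product}.1) produces pair-nice elements of $G$.

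It then remains to turn such a pair-nice $E$ into a \emph{nice} element by appending a bounded number of atoms, as licensed by Lemma \ref{product}: one atom $X_1$ with $E\cdot X_1$ a product of length-two atoms when $\mathsf D^*(G)$ is even, and three atoms $X_1,X_2,X_3$ meeting the hypotheses of Lemma \ref{product}.3 when $\mathsf D^*(G)$ is odd. Because only the existence of a single $k^*$ is needed, I am free to enlarge $E$ and to distribute the liftings between $\Phi$ and $\Psi$ so that the inserted terms $\pm e_1$ cancel in pairs $e_1(-e_1)$ and the shifted terms pair off as well. I expect the genuine difficulty to sit here: verifying that the relevant leftover sequence really is a product of length-two atoms. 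This is a parity problem forcing a case distinction on the parities of $n_1,n_2,n_3$ (equivalently on that of $\mathsf D^*(G)$), and the three independent coordinates make the pairing of all terms into zero-sum pairs $g(-g)$ more intricate than in Lemma \ref{M1}; I would carry it out by explicit computation along the same parity cases, exploiting $n_1\t n_2\t n_3$ to give the auxiliary elements such as $\pm e_1,\pm e_2,\pm(e_1+e_2)$ the orders required to assemble the atoms $X_i$ of length $\mathsf D^*(G)$.
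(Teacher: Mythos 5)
Your lifting device is sound as far as it goes: $\Phi_h(U)$ and $\Psi_h(U)$ are indeed atoms of $G$ of length $\mathsf D^*(G)$ (the projection argument onto $\langle e_1\rangle$ is correct, since the number of terms carrying $e_1$ in a zero-sum subsequence must be $0$ or $n_1$, and minimality of $U$ handles both cases), and pair-niceness does transport from $G_0=\langle e_2,e_3\rangle$ to $G$, including your observation that $|U_i|=n_2+n_3-1>n_3$ guarantees a term $h_i\neq g_i$ to shift. This is a genuinely different device from anything in the paper, which constructs all rank-three atoms directly and never lifts from rank two; note that the paper's gluing lemma (Lemma \ref{M3}) requires both summands to be noncyclic, and since cyclic groups of order at least four admit no nice elements, niceness cannot be transported across your decomposition $G=\langle e_1\rangle\oplus G_0$ -- your $\Phi/\Psi$ trick circumvents this, but it only moves \emph{pair-nice} data.

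That is exactly where the gap sits, and it is not a small one. By Lemma \ref{product}, producing a nice element requires, beyond a pair-nice $E$ over $G$, the closing data: an atom $X_1$ of length $\mathsf D^*(G)$ with $EX_1$ a product of length-$2$ atoms when $\mathsf D^*(G)$ is even, or atoms $X_1,X_2,X_3$ with $a_i\in\supp(X_i)$, $a_1a_2a_3\in\mathcal A(G)$, and $EX_1X_2X_3(a_1a_2a_3)^{-1}$ a product of length-$2$ atoms when $\mathsf D^*(G)$ is odd. You explicitly defer this verification (``I would carry it out by explicit computation along the same parity cases''), but it is the entire content of the paper's proof: four parity cases ($n_1$ even; $n_1$ odd, $n_2$ even; $n_1,n_2$ odd, $n_3$ even; all odd), purpose-built rank-three atoms (the families $U_i, V_j, W_l$, the atoms $X_i$, $X_i'$, $W_l'$, $Y$, $Y_1$), a separate formula for $W_l$ when $n_1=n_2=2$ (the generic one has a negative exponent there), and a separate treatment of $n_1=n_2=n_3=2$. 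There is also a structural wrinkle your sketch glosses over: when $n_1$ is even, $\mathsf D^*(G_0)=n_2+n_3-1$ is odd while $\mathsf D^*(G)=n_1+n_2+n_3-2$ is even, so the nice data supplied by Lemma \ref{M1} for $G_0$ is of the wrong parity type and cannot seed the closing step; and making the inserted terms cancel forces you to coordinate $\Phi_h$ on $U$ with $\Psi_{-h}$ on atoms containing $-h$ (so that $h+e_1$ meets $-h-e_1$) while balancing the $t(n_1-1)$ copies of $e_1$ and $-e_1$ -- bookkeeping of exactly the kind the paper carries out and you do not. As it stands, your proposal is a plausible program with a correct reduction device, but the decisive construction is announced rather than executed, so the lemma is not proved.
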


\begin{proof}
Let $(e_1, e_2, e_3)$ be a basis of $G$ with $\ord(e_1)=n_1$, $\ord(e_2)=n_2$, and $\ord(e_3)=n_3$. Then $\mathsf D^*(G)=n_1+n_2+n_3-2$.
Denote
\begin{align*}
X_1 & =e_1^{n_1-1}e_2^{n_2-1}(-e_3)^{n_3-2}(e_1-e_3)(e_2-e_3),\\
X_2 & =e_1^{n_1-1}(-e_2)^{n_2-2}e_3^{n_3-1}(e_1-e_2)(e_3-e_2),\\
X_3 & =(-e_1)^{n_1-2}e_2^{n_2-1}e_3^{n_3-1}(-e_1+e_2)(-e_1+e_3).
\end{align*}
It is easy to see that $X_i$ is an atom of length $\mathsf 	D^*(G)$ for each $i\in [1, 3]$. Thus
$$X_1X_2X_3 \left(e_1^{n_1}e_2^{n_2}e_3^{n_3}\right)^{-1} \text{ is a product of length $2$ atoms }.$$
If $n_1=n_2=n_3=2$, we have $X_1X_2X_3$ is a product of length $2$ atoms and hence $X_1X_2X_3$ is {\it nice}.
Thus we can assume $n_3\geq 4$.

Denote
\begin{align*}
X_1' & =e_1^{n_1-1}(-e_2)^{n_2-1}e_3^{n_3-2}(e_1+e_3)(-e_2+e_3),\\
X_2' & =e_1^{n_1-1}e_2^{n_2-2}(-e_3)^{n_3-1}(e_1+e_2)(-e_3+e_2),\\
X_3' & =(-e_1)^{n_1-2}(-e_2)^{n_2-1}(-e_3)^{n_3-1}(-e_1-e_2)(-e_1-e_3).
\end{align*}
 Thus $E_1=X_1X_1'X_2X_2'X_3X_3'$ is {\it pair-nice} and $E_1 e_1^{-2n_1}$ is a product of length $2$ atoms. Similarly, we can construct {\it pair-nice} zero-sum sequences $E_2$ and $E_3$ such that $E_2 e_2^{-2n_2}$ and $E_3e_3^{-2n_3}$ are both products of length $2$ atoms.

Set
\begin{align*}
U_i=& e_1^{n_1-1}\left((-1)^{i+1}e_3+(i+1)e_1\right)\left((-1)^{i+1}e_3-ie_1\right)\left((-1)^{i+1}e_3\right)^{n_3-3}
\left((-1)^{i+1}(e_2+e_3)\right)\left((-1)^{i+1}e_2\right)^{n_2-1}\,,\\
V_j=& e_2^{n_2-1}\left((-1)^{j+1}e_3+(j+1)e_2\right)\left((-1)^{j+1}e_3-je_2\right)\left((-1)^{j+1}e_3\right)^{n_3-3}
\left((-1)^{j+1}(e_1+e_3)\right)\left((-1)^{j+1}e_1\right)^{n_1-1}\,,\\
W_l=&\left\{\begin{aligned}
&e_3^{n_3-1}\left((-1)^{l+1}e_2+(l+1)e_3\right)\left((-1)^{l+1}e_2-le_3\right)\left((-1)^{l+1}e_2\right)^{n_2-3}
\left((-1)^{l+1}(e_1+e_2)\right)\left((-1)^{l+1}e_1\right)^{n_1-1}\,,\\
&\hspace{11cm} \qquad\,\text{ if $n_2\ge 3$ ,}\\
&e_3^{n_3-1}\left(e_1+le_2+(l+1)e_3\right)\left(e_1+(l+1)e_2-le_3\right)e_2\,, \hspace{3.5cm}\quad\text{ if  $n_1=n_2=2$ , }
\end{aligned}\right.
\end{align*}
where $i\in [0, n_1-1]$, $j\in [0, n_2-1]$, and $l\in [0, n_3-1]$. It is easy to see that, $V_i$, $U_j$, $W_l$ are all atoms of length $\mathsf D^*(G)$, where $i\in [0, n_1-1]$, $j\in [0, n_2-1]$, and $l\in [0, n_3-1]$.

\medskip
Now we distinguish the following four cases.

\smallskip
\noindent
{\bf Case 1:} $n_1$ is even.

Then $\mathsf D^*(G)$ is even since $1<n_1\mid n_2\mid n_3$.

Since $n_1$ is even, we let $E_1'=U_0\cdot\ldots\cdot U_{n_1-1}$,  $E_2'=V_0\cdot\ldots\cdot V_{n_2-1}$, and $E_3'=W_0\cdot\ldots\cdot W_{n_3-1}$ and hence $E_1'$, $E_2'$, and $E_3'$ are {\it pair-nice}. Moreover $E_1'e_1^{-n_1(n_1-1)}$, $E_2' e_2^{-n_2(n_2-1)}$, and $E_3' e_3^{-n_3(n_3-1)}$ are products of length $2$ atoms.

Therefore let $E_4=E_1'(-E_1)^{\frac{n_1}{2}-1}$, $E_5=E_2'(-E_2)^{\frac{n_2}{2}-1}$, and $E_6=E_3'(-E_3)^{\frac{n_3}{2}-1}$,
and hence $E_4,E_5,E_6$ are {\it pair-nice} and $E_4e_1^{-n_1}$, $E_5 e_2^{-n_2}$, and $E_6 e_3^{-n_3}$ are products of length $2$ atoms.

It follows that $O=E_4E_5E_6(-X_1)(-X_2)(-X_3)$ is a product of length $2$ atoms and $E_4E_5E_6(-X_1)(-X_2)$  is {\it pair-nice}. Then $O$ is {\it nice} by Lemma \ref{product}.2.

\smallskip
\noindent
{\bf Case 2:} $n_1$ is odd, $n_2$ is even.

Then $\mathsf D^*(G)$ is odd.
Since $n_1$ is odd, we let $O_1=U_0\cdot\ldots\cdot U_{n_1-1}$ and hence $O_1U_0^{-1}$ is {\it pair-nice} and $$O_1 \left(e_1^{(n_1-1)n_1}(-e_3)^{n_3-1}(-e_2-e_3)(-e_2)^{n_2-1}\right)^{-1} \text{ is a product of length $2$ atoms }\,.$$

Since $n_2$ is even, we let  $E_2'=V_0\cdot\ldots\cdot V_{n_2-1}$ and $E_3'=W_0\cdot\ldots\cdot W_{n_3-1}$ and hence $E_2'$ and $E_3'$ are {\it pair-nice}. Moreover $E_2'e_2^{-n_2(n_2-1)}$ and $E_3' e_3^{-n_3(n_3-1)}$ are both products of length $2$ atoms.

Therefore let  $E_5=E_2'(-E_2)^{\frac{n_2}{2}-1}$ and $E_6=E_3'(-E_3)^{\frac{n_3}{2}-1}$,
and hence $E_5,E_6$ are {\it pair-nice} and   $E_5 e_2^{-n_2}$ and $E_6 e_3^{-n_3}$ are both  products of length $2$ atoms.

Let $O=(-E_1)^{\frac{n_1-1}{2}}E_5E_6O_1X_1(-X_1)$ and hence $O \left(e_2e_3(-e_2-e_3)\right)^{-1}$ is a product of length $2$ atoms.

Since $(-e_2-e_3)\in \supp (U_0)$, $e_2\in\supp (X_1)$, $e_3\in\supp (-X_1)$, and $O\left(U_0X_1(-X_1)\right)^{-1}$ is {\it pair-nice}, we obtain that  $O$ is {\it nice} by Lemma \ref{product}.3.

\smallskip
\noindent
{\bf Case 3:} $n_1$ is odd, $n_2$ is odd, and $n_3$ is even.

Then $\mathsf D^*(G)$ is even.
Since $n_1$ is odd, we let
 $O_1=U_0\cdot\ldots\cdot U_{n_1-1}$  and hence $O_1U_0^{-1}$ is {\it pair-nice} and

$$O_1 \left(e_1^{(n_1-1)n_1}(-e_3)^{n_3-1}(-e_2-e_3)(-e_2)^{n_2-1}\right)^{-1} \text{ is a product of length $2$ atoms }\,.$$

  Since $n_3$ is even, we let $E_3'=W_0\cdot\ldots\cdot W_{n_3-1}$ and hence $E_3'$ is {\it pair-nice} and $E_3' e_3^{-(n_3-1)n_3}$ is a product of length $2$ atoms. Therefore $E_6=E_3'(-E_3)^{\frac{n_3}{2}-1}$ is {\it pair-nice} and $E_6 e_3^{-n_3}$ is a product of length $2$ atoms.

Since $n_2\ge 3$, we let
\begin{align*}
W_l'=& (e_2+e_3)^{n_3-1}\left((-1)^{l+1}e_2+(l+1)(e_2+e_3)\right)\left((-1)^{l+1}e_2-l(e_2+e_3)\right)\left((-1)^{l+1}e_2\right)^{n_2-3}\cdot\\
&\left((-1)^{l+1}(e_1+e_2)\right)\left((-1)^{l+1}e_1\right)^{n_1-1}\,,
\end{align*}
where $l\in [0,n_3-1]$. Thus $W_l'$ is an atom of length $\mathsf D^*(G)$ for each $l\in [0,n_3-1]$.

Let $E_7=(W_0W_1)\cdot\ldots\cdot (W_{n_3-n_2-1}W_{n_3-n_2}')\cdot\ldots\cdot (W_{n_3-2}'W_{n_3-1}')$ and hence $E_7$ is {\it pair-nice} and $$E_7 \left(e_3^{(n_3-1)(n_3-n_2)}(e_2+e_3)^{(n_3-1)n_2}\right)^{-1} \text{ is a product of length $2$ atoms }.$$

Therefore   let $O'=O_1^{(n_3-1)n_2}E_7(-E_1)^{\frac{(n_3-1)n_2(n_1-1)n_1}{2n_1}}E_2^{\frac{(n_3-1)n_2(n_2-1)}{2n_2}}E_6^{\frac{(n_3-1)n_3(n_2-1)}{n_3}}$ and hence $O'$ is a product of length $2$ atoms  and $O'(U_0)^{-(n_3-1)n_2}$ is {\it pair-nice}.

Let
$Y=e_1^{n_1-1}e_2^{n_2-1}e_3^{n_3-1}(e_1+e_2+e_3)
$ and hence $Y$ is an atom of length $\mathsf D^*(G)$. Since $YU_0$ and $(-Y)U_0$ are both {\it pair-nice}, we obtain that
$U_0^{(n_3-1)n_2-1}(Y(-Y))^\frac{(n_3-1)n_2-1}{2}=(U_0Y)^{\frac{(n_3-1)n_2-1}{2}}(U_0(-Y))^{\frac{(n_3-1)n_2-1}{2}}$ is {\it pair-nice}.

It follows that $O=O'(Y(-Y))^\frac{(n_3-1)n_2-1}{2}$ is a product of length $2$ atoms and
$$OU_0^{-1}=O'(U_0)^{-(n_3-1)n_2}\cdot U_0^{(n_3-1)n_2-1}(Y(-Y))^\frac{(n_3-1)n_2-1}{2} \text{ is {\it pair-nice}.} $$

Then $O$ is {\it nice} by Lemma \ref{product}.2.

\smallskip
\noindent
{\bf Case 4:} $n_1$ is odd, $n_2$ is odd, and $n_3$ is odd.

Then $\mathsf D^*(G)$ is odd.
Since $n_3$ is odd, we let
 $O_3=W_0\cdot\ldots\cdot W_{n_3-1}$  and hence $O_3W_0^{-1}$ is {\it pair-nice} and
 $$
O_3 \left(e_3^{(n_3-1)n_3}(-e_2)^{n_2-1}(-e_1-e_2)(-e_1)^{n_1-1}\right)^{-1} \text{ is a product of length $2$ atoms }\,,
  $$
  and hence
   $$O_3(-E_3)^{\frac{n_3-1}{2}}X_1X_2X_3 \big(e_3^{n_3}e_1e_2(-e_1-e_2)\big)^{-1}\text{ is  a product of length $2$ atoms .}$$
    By $(W_0X_1)(X_2X_3)$ is {\it pair-nice}, we have that $O_3(-E_3)^{\frac{n_3-1}{2}}X_1X_2X_3$ is {\it pair-nice}.

Similarly, if we replace the basis $(e_1,e_2,e_3)$ with $(-e_1,-e_2,e_1+e_2+e_3)$, we can construct a zero-sum sequence $E$ such that $E$ is {\it pair-nice} and $$E \left((e_1+e_2+e_3)^{n_3}(-e_1)(-e_2)(e_1+e_2)\right)^{-1} \text{ is a product of length $2 $ atoms .}$$

Let \begin{align*}
Y=&e_1^{n_1-1}e_2^{n_2-1}e_3^{n_3-1}(e_1+e_2+e_3)\,,\\
Y_1=&e_1^{n_1-1}(-e_1-e_2)^{n_2-1}(-e_3)^{n_3-1}(-e_2-e_3)\,,
\end{align*}
and hence $Y$ and $Y_1$ are  atoms of length $\mathsf D^*(G)$.

 Therefore  let \begin{align*}
 O_4=&Y^{n_3}(-E_1)^{\frac{(n_1-1)n_3}{2n_1}}(-E_2)^{\frac{(n_2-1)n_3}{2n_2}}(-E_3)^{\frac{(n_3-1)}{2}}\,,\\
 O=&O_4(Y_1(-Y_1))^{\frac{n_3-1}{2}}(-E)\,.
 \end{align*}
 Then $O_4 (e_1+e_2+e_3)^{-n_3}$ is a product of length $2$ atoms and hence
 $O \left(e_1e_2(-e_1-e_2)\right)^{-1}$ is a product of length $2$ atoms.
  By calculation, we obtain that
$$O(Y^2Y_1)^{-1}=O_4(Y)^{-n_3}\cdot (YY_1)^{\frac{n_3-3}{2}}\cdot (Y(-Y_1))^{\frac{n_3-1}{2}}\cdot (-E) \text{ is {\it pair-nice,}}$$
Therefore $e_1\in \supp( Y)$, $e_2\in\supp (Y)$, and $-e_1-e_2\in\supp (Y_1)$
 imply that $O$ is {\it nice} by Lemma \ref{product}.3.

\end{proof}

\begin{lemma}\label{M3}
Let  $G=G_1\oplus G_2$, where $G_1, G_2 \subset G$ are noncyclic subgroups of $G$ satisfying $\mathsf r(G)=\mathsf r(G_1)+\mathsf r(G_2)$. Suppose that  there exist $k \in \N$ and  atoms $U_1,  \ldots, U_{2k+1}\in \mathcal A(G_1) $ of length $\mathsf D^*(G_1)$ and atoms $V_1,  \ldots, V_{2k+1}\in \mathcal A(G_2) $ of length $\mathsf D^*(G_2)$ such  that  $U_1\cdot\ldots\cdot U_{2k+1}$ is {\it nice} (with respect to $G_1$) and $V_1\cdot\ldots\cdot V_{2k+1}$ is  {\it nice} (with respect to $G_2$).   Then there exist atoms $W_1,  \ldots, W_{2k+1}\in \mathcal A(G) $ of length $\mathsf D^*(G)$ such that   $W_1\cdot\ldots \cdot W_{2k+1}$ is {\it nice} (with respect to $G$).

\end{lemma}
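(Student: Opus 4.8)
The plan is to build each $W_i$ by \emph{gluing} one atom over $G_1$ to one atom over $G_2$ and then to transport the structure of Definition \ref{def-nice} through this gluing. Write $G = G_1 \oplus G_2$ with projections $\pi_1, \pi_2$. The first ingredient is the numerical identity
\[
\mathsf D^*(G) = \mathsf D^*(G_1) + \mathsf D^*(G_2) - 1 \,,
\]
which follows from the definition of $\mathsf D^*$ once the hypothesis $\mathsf r(G) = \mathsf r(G_1) + \mathsf r(G_2)$ is used to pick invariant-factor decompositions of $G_1$ and $G_2$ whose concatenation is again a divisibility chain. The second ingredient is the gluing itself: for atoms $U \in \mathcal A(G_1)$, $V \in \mathcal A(G_2)$ and terms $u \in \supp(U)$, $v \in \supp(V)$, I consider $(u+v)\cdot (Uu^{-1})\cdot (Vv^{-1}) \in \mathcal B(G)$. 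Applying $\pi_1$ collapses the $G_2$-part to $0$ and turns this sequence into $U$ padded with zeros, and symmetrically for $\pi_2$; a short argument with these projections shows the glued sequence is again an atom, and by the identity above it has length $\mathsf D^*(G_1)+\mathsf D^*(G_2)-1 = \mathsf D^*(G)$. Thus condition (a) of Definition \ref{def-nice} is automatic.

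The key transfer principle is this. Suppose I glue pairwise, taking $W_i$ to be the glue of $U_i$ and $V_{\sigma(i)}$ for a permutation $\sigma$ and chosen gluing terms. Since the $U_i, V_j$ have $0 \notin \supp$, the only terms of $W_1\cdots W_{2k+1}$ that are nonzero in both coordinates are the glued terms, and deleting them leaves the concatenation of a sequence over $G_1$ and a sequence over $G_2$ — precisely the two ``remainders'' $\prod_i U_i(\prod_i u_i)^{-1}$ and $\prod_i V_i(\prod_i v_i)^{-1}$. I would then exploit the elementary fact that a sequence is a product of length-$2$ atoms exactly when it is balanced ($\mathsf v_g = \mathsf v_{-g}$ for $2g \ne 0$ and $\mathsf v_g$ even for $2g = 0$), that this is detected coordinate-wise, and that it is preserved when one balanced sequence is divided by another. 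Hence the delete-the-glued-terms remainder over $G$ is a product of length-$2$ atoms iff the corresponding remainders over $G_1$ and $G_2$ are, so all the ``remainder'' clauses in Definition \ref{def-nice} transfer for free, and everything reduces to controlling the product of the chosen \emph{special} terms.

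Parities now intervene, because $\mathsf D^*(G)$ is odd exactly when $\mathsf D^*(G_1)$ and $\mathsf D^*(G_2)$ have equal parity. The clean case is $\mathsf D^*(G_1), \mathsf D^*(G_2)$ both odd (so $\mathsf D^*(G)$ odd): the special data over $G_1$ and over $G_2$ are each of type ``one length-$3$ block plus pairs'' on $2k+1$ indices, the two set-partitions have the same type, and I can choose $\sigma$ matching them block to block. Gluing $U_i$ to $V_{\sigma(i)}$ at the special terms $p_i$ and $q_{\sigma(i)}$ and declaring $h_i = (p_i, q_{\sigma(i)})$ the special term of $W_i$, every matched pair sums to $0$ in both coordinates (a length-$2$ atom over $G$) and the matched triple gives a genuine length-$3$ atom over $G$, so $\prod_i h_i$ has exactly the shape demanded by Definition \ref{def-nice}(2)(b)(i); equivalently one glues the pair-nice parts and the three leftover atoms separately and finishes with Lemma \ref{product}.3.

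The remaining parities — both even, or opposite — are where the work lies and form the main obstacle. There the two special partitions have \emph{different} types (all pairs on one side versus one triple plus pairs on the other, on differing numbers of indices), so no $\sigma$ can match them. The device I would use is to allow some special terms of the $W_i$ to be ``pure'': a pure-$G_1$ special $(p,0)$, a pure-$G_2$ special $(0,q)$, and a mixed glued special $(-p,-q)$ together form a length-$3$ atom over $G$ whose $G_1$-projection is the pair $p(-p)$ and whose $G_2$-projection is the pair $q(-q)$. Inserting one such configuration manufactures (in the both-even case) the length-$3$ atom required over $G$ while presenting only \emph{pairs} to each coordinate, thereby reconciling the two all-pairs structures; in the opposite-parity case a similar insertion instead absorbs the unwanted length-$3$ block of the odd side into pairs so that the $G$-side stays all pairs. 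The accompanying bookkeeping — routing the extra special term that appears in the even (case (ii)) data, leaving out exactly one coordinate-pair so the remainder picks up a single length-$2$ atom and stays balanced, and realizing each prescribed mixed special term as an actual gluing point — is elementary but delicate, and carrying it out in each parity before invoking Lemma \ref{product} is the bulk of the proof.
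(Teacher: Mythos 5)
Your strategy coincides with the paper's own: build each $W_i$ by the coordinate-wise gluing $U_ig_i^{-1}\cdot V_ih_i^{-1}\cdot(g_i+h_i)$, check atomicity with the two projections, observe that after deleting the special terms the remainder over $G$ is the concatenation of the two coordinate remainders (so the length-$2$-atom clauses of Definition \ref{def-nice} transfer), and split into the three parity cases. Your odd--odd case is exactly the paper's Case 1 (match the triple to the triple and the pairs to the pairs). Your pure/pure/mixed triple $(p,0)\cdot(0,q)\cdot(-p,-q)$ is precisely the paper's $Z_0=g_{2k+2}\cdot h_{2k+2}\cdot(g_3+h_3)$ in its Case 2 (there $g_{2k+2}=-g_3$, $h_{2k+2}=-h_3$), and your ``left-out coordinate-pair'' is the length-$2$ atom $(g_1+h_2)(g_2+h_1)$ that the paper's \emph{cross-gluing} of $U_1$ with $V_2$ and $U_2$ with $V_1$ deposits in the remainder; that swap is forced because with index-matched gluings both extra specials $g_{2k+2}$ and $h_{2k+2}$ would sit inside the single atom $W_1$, which may contribute only one special element.

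The genuine gap is that you defer the even--even and mixed-parity cases as ``bookkeeping,'' while those constructions \emph{are} the lemma, and your stated mechanism for the mixed case would in fact fail. With plain gluings the mixed special terms have the form $g_i+h_{\sigma(i)}$; two such terms form a length-$2$ atom only if simultaneously $g_a=-g_c$ and $h_b=-h_d$, and the odd side's triple $h_1h_2h_3$ (with $\sigma(h_1h_2h_3)=0$ but no negation relations) admits no such pairing, while any glued term not chosen as special lands in the remainder, where no negative for it is available. So ``realizing each prescribed mixed special term as an actual gluing point'' is impossible; the paper instead inserts \emph{composite} terms that are not gluing points, $(g_1-h_2)$ and $(g_{2k+2}+h_1+h_2)$, both inside the single atom $W_1$ so the zero-sum is restored internally, pairing $(g_1-h_2)$ with $(g_2+h_2)$ and $(g_{2k+2}+h_1+h_2)=(g_{2k+2}-h_3)$ with $(g_3+h_3)$, the latter serving as the type-(ii) extra special taken from the remainder's support. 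Such a $W_1$ is no longer a plain gluing, so your ``condition (a) is automatic'' argument does not cover its atomicity; the projection argument still adapts (any proper zero-sum subsequence containing only one of the two mixed terms would project to a proper zero-sum subsequence of $U_1$, since $g_1$ and $g_{2k+2}$ are both removed), but this must be redone. One further flaw: the identity $\mathsf D^*(G)=\mathsf D^*(G_1)+\mathsf D^*(G_2)-1$ does \emph{not} follow from $\mathsf r(G)=\mathsf r(G_1)+\mathsf r(G_2)$ as you claim --- for $G_1\cong C_6\oplus C_6$ and $G_2\cong C_2\oplus C_4$ the rank condition holds, yet $\mathsf D^*(G)=19\neq 11+5-1$, because the concatenated invariant factors admit no divisibility chain; the paper uses the same identity tacitly, but only ever applies the lemma with $G_2$ spanned by the two largest invariant factors of $G$, where it is true. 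In sum: right approach, identical in outline to the paper's, with the easy case complete, but the two decisive constructions are asserted rather than carried out, and the mixed-parity route as you describe it needs the paper's composite-term repair.
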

\begin{proof}

Without loss of generality, we  can distinguish the following three cases.

\medskip
{\bf Case 1.} $\mathsf D^*(G_1)$ and $\mathsf D^*(G_2)$ are odd.

Since $U_1\cdot\ldots\cdot U_{2k+1}$ is {\it nice} (with respect to $G_1$), without loss of generality, we can assume that there exist $g_i\in \supp(U_i)$ for each $i\in [1,2k+1]$ such that $\sigma(g_1g_2g_3)=0$, $g_{2j}=-g_{2j+1}$ for each $j\in [2,k]$, and $U_1\cdot\ldots\cdot U_{2k+1}(g_1\cdot\ldots\cdot g_{2k+1})^{-1}$ is a product of  length $2$ atoms.

With the same reason,  we can assume that there exist $h_i\in \supp(V_i)$ for each $i\in [1,2k+1]$ such that $\sigma(h_1h_2h_3)=0$, $h_{2j}=-h_{2j+1}$ for each $j\in [2,k]$, and $V_1\cdot\ldots\cdot V_{2k+1}(h_1\cdot\ldots\cdot h_{2k+1})^{-1}$ is a product of  length $2$ atoms.

Let $W_i=U_ig_i^{-1}\cdot V_ih_i^{-1}\cdot (g_i+h_i)$ for all $i\in [1,2k+1]$. Then $W_i$ is an atom over $G$ of length $\mathsf D^*(G_1)+\mathsf D^*(G_2)-1=\mathsf D^*(G)$, and
$$(g_1+h_1)\cdot \ldots \cdot (g_{2k+1}+h_{2k+1})=Z_0\cdot Z_1\cdot\ldots\cdot Z_{k-1}$$
where $Z_0=(g_1+h_1)(g_2+h_2)(g_3+h_3)\in \mathcal A(G)$ and  $Z_i=(g_{2i+2}+h_{2i+2})(g_{2i+3}+h_{2i+3})\in \mathcal A(G)$ for each $i\in [1,k-1]$,
$$W_1\cdot\ldots\cdot W_{2k+1}\big((g_1+h_1)\cdot \ldots \cdot (g_{2k+1}+h_{2k+1})\big)^{-1}=U_1\cdot\ldots\cdot U_{2k+1}(g_1\cdot\ldots\cdot g_{2k+1})^{-1}\cdot V_1\cdot\ldots\cdot V_{2k+1}(h_1\cdot\ldots\cdot h_{2k+1})^{-1}$$
is a product of atoms of length $2$.

It follows that  $W_1\cdot\ldots\cdot W_{2k+1}$ is {\it nice} (with respect to $G$) by $\mathsf D^*(G)$ is odd.

\medskip
{\bf Case 2.} $\mathsf D^*(G_1)$ and $\mathsf D^*(G_2)$ are even.

Since $U_1\cdot\ldots\cdot U_{2k+1}$ is {\it nice} (with respect to $G_1$), without loss of generality, we can assume that there exist $g_i\in\supp(U_i)$  for each $i\in [1,2k+1]$ and $g_{2k+2}\in\supp(U_1g_1^{-1})$  such that $g_1=-g_2$, $g_{2k+2}=-g_3$, and $g_{2j}=-g_{2j+1}$ for each $j\in [2,k]$ and $U_1\cdot\ldots\cdot U_{2k+1}(g_1\cdot\ldots\cdot g_{2k+2})^{-1}$ is a product of  length $2$ atoms.

With the same reason,  we can assume that there exist $h_i\in \supp(V_i)$   for each $i\in [1,2k+1]$ and $h_{2k+2}\in\supp(V_1h_1^{-1})$  such that $h_1=-h_2$, $h_{2k+2}=-h_3$,  and $h_{2j}=-h_{2j+1}$ for each $j\in [2,k]$ and $V_1\cdot\ldots\cdot V_{2k+1}(h_1\cdot\ldots\cdot h_{2k+2})^{-1}$ is a product of atoms of length $2$.

Let $W_i=U_ig_i^{-1}\cdot V_ih_i^{-1}\cdot (g_i+h_i)$ for all $i\in [4,2k+1]$ and
\begin{align*}
W_1&=U_1(g_1g_{2k+2})^{-1}\cdot V_2h_2^{-1}\cdot (g_1+h_2)\cdot g_{2k+2}\,,\\
W_2&=U_2g_2^{-1}\cdot V_1(h_1h_{2k+2})^{-1}\cdot (g_2+h_1)\cdot h_{2k+2}\,,\\
W_3&=U_3g_3^{-1}\cdot V_3h_3^{-1}\cdot (g_3+h_3)\,.
\end{align*}

 Then $W_i$ is an atom over $G$ of length $\mathsf D^*(G_1)+\mathsf D^*(G_2)-1=\mathsf D^*(G)$ for all $i\in [1,2k+1]$. It follows that
$$g_{2k+2}\cdot h_{2k+2} \cdot (g_3+h_3)\cdot (g_4+h_4)  \ldots \cdot (g_{2k+1}+h_{2k+1})=Z_0\cdot Z_1\cdot\ldots\cdot Z_{k-1}\,,$$  where $Z_0=g_{2k+2}h_{2k+2}(g_3+h_3)\in \mathcal A(G)$ and  $Z_i=(g_{2i+2}+h_{2i+2})(g_{2i+3}+h_{2i+3})\in \mathcal A(G)$ for each $i\in [1,k-1]$.

Moreover
\begin{align*}
&W_1\cdot\ldots\cdot W_{2k+1}\big(g_{2k+2}\cdot h_{2k+2} \cdot (g_3+h_3)\cdot (g_4+h_4)\cdot  \ldots \cdot (g_{2k+1}+h_{2k+1})\big)^{-1}\\
=&U_1\cdot\ldots\cdot U_{2k+1}(g_1\cdot\ldots\cdot g_{2k+2})^{-1}\cdot V_1\cdot\ldots\cdot V_{2k+1}(h_1\cdot\ldots\cdot h_{2k+2})^{-1}\cdot (g_1+h_2)(g_2+h_1)
\end{align*}
is a product of atoms of length $2$.

Thus $W_1\cdot\ldots\cdot W_{2k+1}$ is {\it nice} (with respect to $G$) by $\mathsf D^*(G)$ is odd.

\medskip
{\bf Case 3.} $\mathsf D^*(G_1)$ is even and $\mathsf D^*(G_2)$ is odd.

Since $U_1\cdot\ldots\cdot U_{2k+1}$ is {\it nice} (with respect to $G_1$) and $\mathsf D^*(G_1)$ is even, without loss of generality, we can assume that there exist $g_i\in\supp(U_i)$  for each $i\in [1,2k+1]$ and $g_{2k+2}\in \supp(U_1g_1^{-1})$ such that $g_1=-g_2$, $g_{2k+2}=-g_3$, and $g_{2j}=-g_{2j+1}$ for each $j\in [2,k]$ and $U_1\cdot\ldots\cdot U_{2k+1}(g_1\cdot\ldots\cdot g_{2k+2})^{-1}$ is a product of  length $2$ atoms.

Since $V_1\cdot\ldots\cdot V_{2k+1}$ is {\it nice} (with respect to $G_2$) and $\mathsf D^*(G_2)$ is odd, without loss of generality,  we can assume that there exist $h_i\in \supp(V_i)$ for each $i\in [1,2k+1]$ such that $\sigma(h_1h_2h_3)=0$ and $h_{2j}=-h_{2j+1}$ for each $j\in [2,k]$ and $V_1\cdot\ldots\cdot V_{2k+1}(h_1\cdot\ldots\cdot h_{2k+1})^{-1}$ is a product of  length $2$ atoms.

Let $W_i=U_ig_i^{-1}\cdot V_ih_i^{-1}\cdot (g_i+h_i)$ for all $i\in [4,2k+1]$ and
\begin{align*}
W_1&=U_1(g_1g_{2k+2})^{-1}\cdot V_1h_1^{-1}\cdot (g_1-h_2)\cdot (g_{2k+2}+h_1+h_2)\,,\\
W_2&=U_2g_2^{-1}\cdot V_2h_2^{-1}\cdot (g_2+h_2)\,,\\
W_3&=U_3g_3^{-1}\cdot V_3h_3^{-1}\cdot (g_3+h_3)\,.
\end{align*}

 Then $W_i$ is an atom over $G$ of length $\mathsf D^*(G_1)+\mathsf D^*(G_2)-1=\mathsf D^*(G)$ for all $i\in [1,2k+1]$. It follows that
$$(g_1-h_2)\cdot (g_{2k+2}+h_1+h_2)\cdot (g_2+h_2) \cdot (g_3+h_3)\cdot (g_4+h_4) \cdot \ldots \cdot (g_{2k+1}+h_{2k+1})$$ is a product of  length $2$ atoms and
\begin{align*}
&W_1\cdot\ldots\cdot W_{2k+1}\Big((g_1-h_2)\cdot (g_{2k+2}+h_1+h_2)\cdot (g_2+h_2) \cdot (g_3+h_3)\cdot (g_4+h_4) \cdot  \ldots \cdot (g_{2k+1}+h_{2k+1})\Big)^{-1}\\
=&U_1\cdot\ldots\cdot U_{2k+1}(g_1\cdot\ldots\cdot g_{2k+2})^{-1}\cdot V_1\cdot\ldots\cdot V_{2k+1}(h_1\cdot\ldots\cdot h_{2k+1})^{-1}
\end{align*}
is a product of  length $2$ atoms.

Thus $W_1\cdot\ldots\cdot W_{2k+1}$ is {\it nice} (with respect to $G$) by $\mathsf D^*(G)$ is even.

\end{proof}

\begin{proof}[\bf Proof of  Theorem \ref{Main}]
Let $H$ be a Krull monoid with finite noncyclic class group $G$ such that every class contains a prime divisor. By Proposition \ref{2.1} we have $\rho_k (H) = \rho_k (G)$ for every $k \in \N$.
If $G \cong C_2 \oplus C_2$, then $\mathsf D^* (G)=3$ and the assertion of the theorem follows from Inequality \ref{inequality} with $k^*=1$. Suppose that $\mathsf D^* (G) \ge 4$. We start with the following assertion.

\noindent{\bf Assertion.} There exist a $k^*\in \N$ and atoms $W_1, \ldots, W_{2k^*+1}$ over $G$ of length $\mathsf D^*(G)$ such that $W_1\cdot\ldots\cdot W_{2k^*+1}$ is {\it nice} (with respect to $G$).
\medskip

\noindent{\it Proof of Assertion. } We proceed by induction on $\mathsf r(G)$.

If $\mathsf r(G)=2$ or $3$, then the Assertion follows by Lemma \ref{M1} and \ref{M2}. Assume that $\mathsf r (G) \ge 4$ and suppose that the Assertion is true for all groups of smaller rank. Let $G=G_1\oplus G_2$ with $\mathsf r(G_1)=r-2$ and $\mathsf r(G_2)=2$. Then by our assumption, there exist a $k_1\in \N$ and atoms $U_1,\ldots ,U_{2k_1+1}$ over $G_1$ of length $\mathsf D^*(G_1)$ such that $U_1\cdot\ldots\cdot U_{2k_1+1}$ is {\it nice} (with respect to $G_1$).
By Lemma \ref{M1}, there exist a $k_2\in \N$ and  atoms $V_1,\ldots ,V_{2k_2+1}$ over $G_2$ of length $\mathsf D^*(G_2)$ such that $V_1\cdot\ldots\cdot V_{2k_2+1}$ is {\it nice} (with respect to $G_2$).

Let  $k^*=\max(k_1,k_2)$. Without loss of generality, we can assume that $k_1=k^*\ge k_2$. Thus $k_1-k_2$ is even and hence $V_1\cdot\ldots\cdot V_{2k_2+1}\cdot (V_1(-V_1))^{k_1-k_2}$ is {\it nice} (with respect to $G_2$). Therefore the Assertion follows by Lemma \ref{M3}.
\qed{(Proof of Assertion)}

\medskip
By the very definition of nice elements (and outlined in detail after Definition \ref{def-nice}), it follows that
\begin{equation}\label{E1}
\rho_{2k^*+1}(G)\ge  k^*\mathsf D^*(G)+\left\lfloor \frac{\mathsf D^*(G)}{2}\right\rfloor\,.
\end{equation}
Let $k \ge k^*$. Since $\rho_{2(k-k^*)} (G) = (k-k^*)\mathsf D (G)$ and $\mathcal U_{2(k-k^*)} (G) + \mathcal U_{2k^*+1}(G) \subseteq \mathcal U_{2k+1}(G)$, it follows that
\[
\rho_{2k+1}(G) \ge \rho_{2(k-k^*)}(G) + \rho_{2k^*+1}(G) \ge (k-k^*)\mathsf D (G) +  k^*\mathsf D^*(G)+\left\lfloor \frac{\mathsf D^*(G)}{2}\right\rfloor\,.
\]
If $\mathsf D(G)=\mathsf D^*(G)$, then the assertion follows from Inequality \ref{inequality}.
\end{proof}

\bibliographystyle{amsplain}

\end{document}